\documentclass{amsart}
\usepackage{amssymb}
\usepackage{amsfonts}
\usepackage{amssymb}
\usepackage{amsmath}
\usepackage{amsthm}
\usepackage{enumerate}
\usepackage{url}
\usepackage{tabularx}
\usepackage{centernot}
\usepackage{mathtools}
\usepackage{stmaryrd}
\usepackage{amsthm,amssymb}
\usepackage{etoolbox}
\usepackage{tikz}
\usepackage{amssymb}
\usetikzlibrary{matrix}
\usepackage{tikz-cd}
\usepackage{tikz}
\usepackage{marginnote}
\definecolor{mygray}{gray}{0.85}
\usepackage[backgroundcolor=mygray,colorinlistoftodos,prependcaption,textsize=small]{todonotes}
\usepackage{xargs}                      

\renewcommand{\leq}{\leqslant}
\renewcommand{\geq}{\geqslant}

\makeatletter
\def\subsection{\@startsection{subsection}{3}%
  \z@{.5\linespacing\@plus.7\linespacing}{.3\linespacing}%
  {\bfseries\centering}}
\makeatother

\makeatletter
\def\subsubsection{\@startsection{subsubsection}{3}%
  \z@{.5\linespacing\@plus.7\linespacing}{.3\linespacing}%
  {\centering}}
\makeatother

\makeatletter
\def\myfnt{\ifx\protect\@typeset@protect\expandafter\footnote\else\expandafter\@gobble\fi}
\makeatother

\newtheorem{theorem}{Theorem}[section]

\newtheorem{definition}[theorem]{Definition}
\newtheorem{lemma}[theorem]{Lemma}
\newtheorem{proposition}[theorem]{Proposition}

\newtheorem{problem}[theorem]{Problem}

\newtheorem{fact}[theorem]{Fact}

\newtheorem{remark}[theorem]{Remark}

\newtheorem{notation}[theorem]{Notation}

\newtheorem{conjecture}[theorem]{Conjecture}

\newtheorem*{theorem1}{Theorem~1.1}
\newtheorem*{proposition2}{Proposition~1.2}
\newtheorem*{theorem6}{Theorem~1.6}
\newtheorem*{proposition8}{Proposition~1.8}
\newtheorem*{lemma9}{Lemma~1.9}
\newtheorem*{conjecture10}{Conjecture~1.10}

\newcounter{claimcounter}
\numberwithin{claimcounter}{theorem}


\setcounter{MaxMatrixCols}{20}

%

\begin{document}

	\begin{abstract} We prove that no quantifier-free formula in the language of group theory can define the $\aleph_1$-half graph in a Polish group, thus generalising some results from \cite{paolini&shelah_metric_cyclic}. We then pose some questions on the space of groups of automorphisms of a given Borel complete class, and observe that this space must contain at least one uncountable group. 
Finally, we prove some results on the structure of the group of automorphisms of a locally finite group: firstly, we prove that it is not the case that every group of automorphisms of a graph of power $\lambda$ is the group of automorphism of a locally finite group of power $\lambda$; secondly, we conjecture that the group of automorphisms of a locally finite group of power $\lambda$ has a locally finite subgroup of power $\lambda$, and reduce the problem to a problem on $p$-groups, thus settling the \mbox{conjecture in the case $\lambda = \aleph_0$.}
\end{abstract}

\title{Some Results on Polish Groups}
\thanks{Partially supported by European Research Council grant 338821. No. 1155 on Shelah's publication list. The first author would like to thank Riccardo Camerlo for useful discussions related to the questions posed in Section~3.}

\author{Gianluca Paolini}
\address{Einstein Institute of Mathematics,  The Hebrew University of Jerusalem, Israel}

\author{Saharon Shelah}
\address{Einstein Institute of Mathematics,  The Hebrew University of Jerusalem, Israel \and Department of Mathematics,  Rutgers University, U.S.A.}

\date{\today}
\maketitle

\section{Introduction}

	We collect some results (of different nature) on the theory of Polish groups.

\smallskip 

\noindent \underline{\em  Section 2. Definable $\aleph_1$-Half Graphs in Polish Groups}.

\smallskip 
 
\noindent By the $\aleph_1$-half graph $\Gamma(\aleph_1)$ we mean the graph on vertex set $\{ a_{\alpha} : \alpha < \aleph_1 \} \cup  \{ b_{\beta}: \beta < \aleph_1 \}$ with edge relation $a_\alpha E_{\Gamma} b_\beta$ if and only if  $\alpha < \beta$. In the process of characterization of the graph products of cyclic groups embeddable in a Polish group \cite{paolini&shelah_metric_cyclic}, we observed that the commutation relation $x^{-1}y^{-1}xy = e$ can never define the $\aleph_1$-half graph in a Polish group $G$. Here we generalize this to:
	
	\begin{theorem}\label{theorem_half_1} No quantifier-free formula $\varphi(\bar{x}, \bar{y})$ in the language of group theory can define the $\aleph_1$-half graph in a Polish group $G$.
\end{theorem}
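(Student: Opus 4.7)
The plan is to derive a contradiction from a hypothetical witness by combining the separability of $G$ with the continuity of the group operations. Suppose such $\varphi(\bar{x},\bar{y})$ exists, witnessed by tuples $\bar{a}_\alpha\in G^n$, $\bar{b}_\beta\in G^m$ with $G\models\varphi(\bar{a}_\alpha,\bar{b}_\beta)\iff\alpha<\beta$. First I would put $\varphi$ into disjunctive normal form $\varphi=\bigvee_{j\le k}\psi_j$, where each $\psi_j$ is a conjunction of atomic formulas $w_{j,\ell}(\bar{x},\bar{y})=e$ and their negations. For each $\alpha<\omega_1$ the uncountable set $\{\beta>\alpha\}$ is partitioned by which $\psi_j$ is satisfied at $(\bar{a}_\alpha,\bar{b}_\beta)$, so some $j(\alpha)$ and uncountable $B_\alpha\subseteq(\alpha,\omega_1)$ have $\psi_{j(\alpha)}$ holding throughout; a pigeonhole in $j(\alpha)$ then yields an uncountable $A\subseteq\omega_1$ and a single $\psi=\psi_{j^*}$ such that $\psi(\bar{a}_\alpha,\bar{b}_\beta)$ holds for all $\alpha\in A$ and all $\beta\in B_\alpha$.

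Next I would exploit the separability of $G^n$, $G^m$, and $G^{n+m}$ to find a joint condensation point $(\bar{a}^*,\bar{b}^*)$ of the family $\{(\bar{a}_\alpha,\bar{b}_\beta):\alpha\in A,\ \beta\in B_\alpha\}$, after further refining $A$ and each $B_\alpha$ if necessary to ensure compatibility of the two projections. From this condensation I would extract two countable sequences converging to $(\bar{a}^*,\bar{b}^*)$: an \emph{edge} sequence $(\alpha_\ell,\beta_\ell)$ with $\alpha_\ell\in A$, $\beta_\ell\in B_{\alpha_\ell}$ and $\alpha_\ell<\beta_\ell$ (so $\psi$ holds along it by construction), and a \emph{non-edge} sequence $(\alpha'_\ell,\beta'_\ell)$ with $\alpha'_\ell>\beta'_\ell$ (so $\psi$ must fail along it, these being non-edges of the half-graph). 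The non-edge sequence is obtained using the individual condensations of $\bar{a}^*$ and $\bar{b}^*$: at stage $\ell$ first pick $\beta'_\ell$ with $\bar{b}_{\beta'_\ell}$ in a small ball around $\bar{b}^*$, then pick $\alpha'_\ell>\beta'_\ell$ with $\bar{a}_{\alpha'_\ell}$ in a small ball around $\bar{a}^*$.

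Finally I would apply continuity. Each positive atomic conjunct $w_\ell=e$ of $\psi$ defines a closed subset of $G^{n+m}$ and hence passes to the limit along the edge sequence: $w_\ell(\bar{a}^*,\bar{b}^*)=e$. If every negative conjunct $w_\ell\ne e$ of $\psi$ is stable in some fixed open neighborhood $U$ of $(\bar{a}^*,\bar{b}^*)$ -- i.e.\ $w_\ell$ is bounded away from $e$ on $U$ -- then $\psi$ holds throughout $U$, hence at the non-edge points $(\bar{a}_{\alpha'_\ell},\bar{b}_{\beta'_\ell})$ for $\ell$ sufficiently large, contradicting that these pairs correspond to non-edges of the half-graph.

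The main obstacle is the remaining case, where along the edge sequence some value $w_\ell(\bar{a}_{\alpha_\ell},\bar{b}_{\beta_\ell})$ is nonzero yet converges to $e$: open conditions are not preserved under limits, and such a degenerate clustering must be ruled out. Here I would iterate the separability argument one level deeper inside $G$ itself, applying it to the $\omega_1$-sequence of values $w_\ell(\bar{a}_\alpha,\bar{b}_\beta)\in G\setminus\{e\}$ and using continuity of inversion and multiplication to replace $(\bar{a}^*,\bar{b}^*)$ by a nearby perturbation at which the negative conjuncts hold strictly while all positive ones still hold. This delicate refinement -- which exploits the Polish-group structure in full, and not merely the separability of an abstract metric space -- is where the argument must genuinely extend the single-commutator case of \cite{paolini&shelah_metric_cyclic}.
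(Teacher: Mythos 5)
Your setup --- disjunctive normal form, pigeonhole onto a single disjunct $\psi$, condensation points via separability of $G^n$, $G^m$, $G^{n+m}$, and closure of the positive word conditions under limits --- is sound, but the proof collapses exactly at the point you yourself flag: the degenerate case in which a negated word $w$ of $\psi$ satisfies $w(\bar{a}_{\alpha_\ell}, \bar{b}_{\beta_\ell}) \neq e$ along the edge sequence while these values converge to $e$. Your proposed repair, perturbing $(\bar{a}^*, \bar{b}^*)$ to a nearby point of the group at which the negative conjuncts hold strictly and the positive ones still hold, does not yield a contradiction even when such a point exists: the contradiction must be witnessed at a pair $(\bar{a}_{\alpha'}, \bar{b}_{\beta'})$ with $\alpha' > \beta'$ \emph{taken from the given family}, not at an arbitrary nearby group element, and there is no reason the positive conjuncts $u = e$ --- which hold only at edge pairs, and which, being closed conditions, do not spread to open neighborhoods --- should hold at any non-edge pair of the family near your perturbed point. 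So the final step is not a ``delicate refinement'' to be filled in later; as structured, the limit-point strategy is asymmetric in a way that cannot accommodate a conjunction of both signs: open conditions do not pass to limits, closed conditions do not spread to neighborhoods, and you need both simultaneously at a single point.

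The paper's proof (Theorem~\ref{stability_claim}) avoids ever passing a negative condition to a limit. For a single word equation $\sigma = e$ it proves a polarized partition theorem: there are uncountable $B_\ell \subseteq A_\ell$ and a truth value $t$ such that $\sigma = e$ holds with value $t$ on the \emph{entire rectangle} $\prod_\ell B_\ell$. The mechanism is a dichotomy: let $\mathcal{U}_\ell$ be the (countable, by separability) set of indices $\alpha$ whose tuple $\bar{g}^\ell_\alpha$ has some ball around it meeting the family in only countably many indices, and pick $\alpha(*)$ above $\bigcup_\ell \mathcal{U}_\ell$. Either $\sigma = e$ holds on the whole tail rectangle $\prod_\ell (A_\ell - \alpha(*))$, giving $t = 1$; or there is a failure witness, and continuity of the word map spreads $d(\sigma, e) > \varepsilon/2$ to $\xi$-balls around the witnessing tuples, balls which contain uncountably many indices of the family precisely because the witnesses avoid $\mathcal{U}_\ell$, giving $t = 0$. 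Iterating this over the finitely many atomic subformulas of $\varphi$ (no DNF or pigeonhole needed), shrinking the uncountable index sets finitely many times, produces a rectangle $B_0 \times B_1$ on which $\varphi$ has constant truth value --- which is incompatible with the half-graph, since any such rectangle contains pairs with $\alpha < \beta$ and pairs with $\alpha > \beta$. If you replace your final step by this word-by-word dichotomy, your argument goes through; as written, it has a genuine gap.
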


\noindent We actually prove a stronger result of independent interest, i.e. that Polish groups do not admit ``polarized $\aleph_1$-partitions'', see Theorem~\ref{stability_claim} for the detailed statement of this result. Finally, we would like to mention that Theorem~\ref{theorem_half_1} can be considered as a form of model-theoretic stability for Polish groups, in this direction see also~\cite{Sh849}.

\medskip 

\noindent \underline{\em  Section 3. The Space of Automorphism Groups of a Borel Complete Class}.

\smallskip 

\noindent By a Borel complete class we mean a Borel class $\mathbf{K}$ of structures with domain $\omega$ in a fixed language $L$ such that the isomorphism relation on $\mathbf{K}$ is as complicated as possible (equivalently, the countable graph isomorphism relation is reducible to it -- cf. Definition \ref{borel_complete}). We wonder here: how complex can $Aut(\mathbf{K}) = \{ Aut(A) : A \in \mathbf{K} \}$ be for a given Borel complete class? Can $Aut(\mathbf{K})$ contain only one isomorphism type, resp. finitely many, resp. countably many (cf. Problem~\ref{problem_sec2})? In this direction:  

	\begin{proposition}\label{theorem2} Let $\mathbf{K}$ be a Borel class of $L$-structures with domain $\aleph_0$ such that for every $G \in Aut(\mathbf{K})$ we have that $|G| \leq \aleph_0$.  Then the isomorphism relation on $\mathbf{K}$ {\em is} Borel, and so in particular $\mathbf{K}$ is {\em not} Borel complete (cf. Definition \ref{borel_complete}).
\end{proposition}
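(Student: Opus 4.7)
The plan is to exhibit the isomorphism relation on $\mathbf{K}$ as the projection of a closed set with countable vertical sections, and then invoke the Luzin--Novikov theorem. Let $X_L$ denote the standard Borel space of $L$-structures with universe $\omega$, equipped with the continuous logic action of $S_\infty$; then $\mathbf{K} \subseteq X_L$ is Borel, and for every $A \in X_L$ the stabiliser of $A$ under this action is precisely $Aut(A)$.

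Define
\[
R = \{ (A, B, f) \in \mathbf{K} \times \mathbf{K} \times S_\infty : f \cdot A = B \}.
\]
Since the logic action is continuous and equality is closed in the Hausdorff space $X_L$, the set $R$ is closed in $\mathbf{K} \times \mathbf{K} \times S_\infty$, hence Borel. For each pair $(A,B) \in \mathbf{K}^2$, the vertical section $R_{(A,B)} = \{ f \in S_\infty : f \cdot A = B \}$ is either empty (if $A \not\cong B$) or a left coset of the stabiliser $Aut(A)$, and so by hypothesis has cardinality at most $|Aut(A)| \leq \aleph_0$. This countability of the fibres is the only place where the standing assumption of the proposition is used, and it is really the crux of the argument.

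Applying the Luzin--Novikov theorem to $R$, the projection $\pi_{\mathbf{K} \times \mathbf{K}}(R)$ is Borel; but this projection is exactly the isomorphism relation on $\mathbf{K}$, which yields the first half of the proposition. The second half is then immediate, and independent of the hypothesis on automorphism groups: by the theorem of Friedman--Stanley the isomorphism relation on countable graphs is not Borel, so it cannot Borel-reduce to the Borel equivalence relation just obtained, and therefore $\mathbf{K}$ is not Borel complete. No substantial obstacle is anticipated beyond identifying the correct set $R$ and recognising its fibres as cosets of $Aut(A)$; the argument is a short application of a classical theorem of descriptive set theory.
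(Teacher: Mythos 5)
Your proposal is correct and follows essentially the same route as the paper: both exhibit the isomorphism relation as the projection of the Borel set $R = \{(A,B,f) : f \colon A \cong B\} \subseteq \mathbf{K} \times \mathbf{K} \times S_\infty$, observe that each fibre $R_{(A,B)}$ is countable (being empty or a coset of $Aut(A)$), and conclude by the Luzin--Novikov theorem, which is exactly the cited \cite[Lemma 18.12]{kech}. Your explicit justification of the coset structure of the fibres and of the final non-Borel-completeness step merely spells out details the paper leaves implicit.
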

	
	On questions affine to this topic see also the interesting recent work~\cite{kech_tent}.

\medskip

\noindent \underline{\em  Section 4. Group of Automorphisms of Locally Finite Groups}. 

\begin{notation}\label{notation_locally_finite}
	\begin{enumerate}[(1)]
	\item We denote by $\mathbf{K}_{\mathrm{lf}}$ the class of locally finite groups.
	\item We denote by $\mathbf{K}_{\mathrm{gf}}$ the class of graphs.
	\item For $\mathbf{K}$ a class of $L$-structures and $\lambda$ an infinite cardinal, we let:
	\begin{enumerate}[(3.1)]
	\item $\mathbf{K}^{\lambda} = \{ M \in \mathbf{K} : \text{the domain of $M$ is } \lambda \}$;
	\item $Aut(\mathbf{K}) = \{ Aut(M) \; : M \in \mathbf{K} \}$.
\end{enumerate}	
\end{enumerate}
\end{notation}

	\begin{fact}[\cite{mekler}] The class $\mathbf{K}^{\aleph_0}_{\mathrm{lf}}$ is Borel complete (cf. Definition~\ref{borel_complete}). 
\end{fact}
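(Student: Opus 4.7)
The plan is to use Mekler's construction, which associates to each graph $\Gamma$ on $\omega$ a nilpotent group $M(\Gamma)$ of class $2$ and exponent $p$ (for a fixed odd prime $p$). Explicitly, $M(\Gamma)$ is the group with generators $\{x_v : v \in \Gamma\}$, relations making it of exponent $p$ and nilpotent of class $2$, together with $[x_u, x_v] = 1$ iff $u \mathrel{E_\Gamma} v$. Any finitely generated nilpotent group of class $2$ and exponent $p$ is finite (its derived subgroup is central and of exponent $p$, and both $M/[M,M]$ and $[M,M]$ are finite-dimensional $\mathbb{F}_p$-vector spaces), so $M(\Gamma) \in \mathbf{K}^{\aleph_0}_{\mathrm{lf}}$.

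Next, I would verify that the assignment $\Gamma \mapsto M(\Gamma)$ can be implemented as a Borel map from graphs on $\omega$ to groups on $\omega$. Since one can enumerate the normal-form words in the generators in an effective, uniform way and carry out the multiplication table by rewriting modulo the relations (each individual product depends only on finitely many bits of the input graph), the resulting group operation on $\omega$ is a Borel function of $\Gamma$. The forward direction $\Gamma \cong \Gamma' \Rightarrow M(\Gamma) \cong M(\Gamma')$ is clear by functoriality of the construction.

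The hard part is the backward direction: recovering $\Gamma$ from $M(\Gamma)$. Here Mekler's argument first restricts to \emph{nice} graphs (no triangles, no squares, and for every pair $u \neq v$ some vertex distinguishes them), so that the ``vertex'' elements $x_v$ can be characterized up to scalars inside $M(\Gamma)$ by purely group-theoretic conditions (they are the so-called singular elements, definable via properties of their centralizers in $M(\Gamma)$ relative to the commutator subgroup). Once the vertex set is recovered and the edge relation is read off from commutation, an isomorphism $M(\Gamma) \cong M(\Gamma')$ induces an isomorphism of the underlying nice graphs. To handle \emph{arbitrary} graphs, one precomposes with a standard Borel reduction from graphs on $\omega$ to nice graphs on $\omega$: replace each edge $uv$ by a small gadget (e.g., a path of length $3$ or a specified tree attached to $\{u,v\}$) that distinguishes original vertices from auxiliary ones and destroys all short cycles, in a Borel-uniform way.

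The main obstacle is the interpretation step: one must show that in a nice graph the group-theoretic type of $x_v$ is rigid enough that every abstract isomorphism of Mekler groups is forced to permute the $\langle x_v \rangle$'s and preserve their commutation pattern. This is exactly Mekler's theorem on the first-order bi-interpretation between nice graphs and their associated groups; granting it, Borelness of the reduction and local finiteness of the output combine to give that $\mathbf{K}^{\aleph_0}_{\mathrm{lf}}$ is Borel complete.
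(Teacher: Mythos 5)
Your proposal is correct and matches the paper's approach exactly: the paper states this as a Fact with no proof, deferring entirely to the cited Mekler paper, whose argument is precisely what you reconstruct --- the Borel map $\Gamma \mapsto M(\Gamma)$ into nilpotent class-$2$ exponent-$p$ groups (which are locally finite by the finiteness count you give), with injectivity on isomorphism types via Mekler's interpretation of nice graphs in their groups, precomposed with a Borel reduction of arbitrary graphs to nice graphs. The only caveat is that the graphs-to-nice-graphs step requires Mekler's specific coding rather than a bare edge-subdivision gadget (subdividing alone need not yield niceness or reflect isomorphism), but you correctly flag this as a known standard step rather than claiming your sketch settles it.
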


	In this section we deal with the following problem:
	
	\begin{problem}\label{char_autos} Characterize $Aut(\mathbf{K}^{\lambda}_{\mathrm{lf}})$, for $\lambda \geq \aleph_0$.
\end{problem}

	In this direction we first prove:
	
	\begin{theorem}\label{auto_graphs} Let $\lambda \geq \aleph_0$, then:
	 $$Aut(\mathbf{K}^{\lambda}_{\mathrm{gf}}) \neq Aut(\mathbf{K}^{\lambda}_{\mathrm{lf}}).$$
\end{theorem}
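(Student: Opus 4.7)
The plan is to show that the trivial group $\{e\}$ belongs to $Aut(\mathbf{K}^{\lambda}_{\mathrm{gf}})$ but not to $Aut(\mathbf{K}^{\lambda}_{\mathrm{lf}})$, which directly yields the desired inequality.

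For the first membership, I would invoke the classical fact that for every infinite cardinal $\lambda$ there exists a rigid graph on $\lambda$ vertices; one concrete construction is to attach, to each vertex of a $\lambda$-sized independent set, a pairwise non-isomorphic ``tag'' subgraph whose isomorphism type encodes the index ordinal (so each vertex has a unique local invariant and must be fixed by every automorphism). This witnesses $\{e\} \in Aut(\mathbf{K}^{\lambda}_{\mathrm{gf}})$.

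For the non-membership, the heart of the argument is to verify that any infinite locally finite group $H$ admits a non-trivial automorphism; this is handled via a three-case split. Case (i): if $H$ is non-abelian then $\mathrm{Inn}(H) \cong H/Z(H)$ is non-trivial. Case (ii): if $H$ is abelian and contains an element of order strictly greater than $2$, then the inversion map $x \mapsto x^{-1}$ is a non-trivial automorphism. Case (iii): if $H$ is abelian of exponent $2$, then $H$ is an infinite-dimensional vector space over $\mathbb{F}_2$, and any non-trivial permutation of a basis induces a non-trivial linear, hence group, automorphism. Since any $H \in \mathbf{K}^{\lambda}_{\mathrm{lf}}$ with $\lambda \geq \aleph_0$ falls into one of these cases, $Aut(H) \neq \{e\}$, and so $\{e\} \notin Aut(\mathbf{K}^{\lambda}_{\mathrm{lf}})$.

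Combining the two parts proves the theorem. The only mildly interesting step is securing the existence of a rigid graph of cardinality $\lambda$ for arbitrary infinite $\lambda$; the case analysis for locally finite groups is entirely routine.
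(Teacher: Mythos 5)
Your proof is correct, but it takes a genuinely different route from the paper. The paper separates the two classes by a \emph{torsion} invariant rather than by rigidity: its Lemma~4.9 shows that for every infinite locally finite group $G$ the group $Aut(G)$ contains a non-trivial \emph{locally finite} subgroup (this requires real abelian group theory --- Fuchs's decomposition theorems for bounded and torsion groups, and the embedding of the $p$-adic integers $J_p$ into $Aut(G)$ for $G$ an unbounded abelian $p$-group), while by folklore Fact~4.10 the automorphism group of any linear order is torsion-free and every automorphism group of a structure of power $\lambda$ is realized by a graph of power $\lambda$; so automorphism groups of linear orders of power $\lambda$ lie in $Aut(\mathbf{K}^{\lambda}_{\mathrm{gf}})$ but not in $Aut(\mathbf{K}^{\lambda}_{\mathrm{lf}})$. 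You instead use the trivial group as the separating object, which makes the group-theoretic half much more elementary: your three-case analysis is just the classical proof that every group of order $>2$ has a non-trivial automorphism (inner automorphisms, inversion, transposing two basis vectors of an $\mathbb{F}_2$-vector space), and it does not need local finiteness at all, let alone Fuchs or $J_p$. The trade-off is that the burden shifts to the graph side: you need a rigid graph of every infinite cardinality $\lambda$. That is a classical fact (e.g. it follows from the Vop\v{e}nka--Pultr--Hedrl\'{\i}n theorem that every set carries a rigid binary relation, together with a standard arrow-replacement converting rigid digraphs to rigid graphs; or, in the spirit of the paper, from Fact~4.10(2) applied to the rigid linear order $(\lambda,<)$), but your sketched tag construction as stated needs more care than you suggest: for the whole graph to be rigid it is not enough that the tags be pairwise non-isomorphic --- each tag must itself be rigid and the tags must not admit symmetry-creating partial embeddings into one another, so you should either supply those details or cite the classical result. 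One further difference worth noting: the paper's stronger Lemma~4.9 is what motivates its Conjecture~1.7 on locally finite subgroups of $Aut(G)$, so the paper's route buys additional structural information (non-trivial torsion in $Aut(G)$, which your cases in fact also produce, since inversion and a basis transposition have order $2$ and inner automorphisms of locally finite groups have finite order --- though you never use this), whereas your route buys a shorter and more self-contained proof of the bare inequality.
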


	The proof of Theorem~\ref{auto_graphs} leads to the following conjecture:
	
	\begin{conjecture}\label{conjecture_on_auto_intro} If $G \in \mathbf{K}^{\lambda}_{\mathrm{lf}}$, then $Aut(G)$ has a locally finite subgroup of power~$\lambda$.
\end{conjecture}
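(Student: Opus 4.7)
The plan is to prove the conjecture by a two-step reduction, ultimately landing on a question about abelian $p$-groups. First, I would exploit inner automorphisms: the map $G \to \mathrm{Aut}(G)$ with image $\mathrm{Inn}(G) \cong G/Z(G)$ always lands in the locally finite part of $\mathrm{Aut}(G)$, since $\mathrm{Inn}(G)$ is a homomorphic image of the locally finite $G$. If $|\mathrm{Inn}(G)| = \lambda$, we are done, so we may assume $|G/Z(G)| < \lambda$ and consequently $|Z(G)| = \lambda$. In this central case, the abelian characteristic subgroup $Z(G)$ is large, and I would look to the \emph{central automorphisms} of $G$, namely those $\phi \in \mathrm{Aut}(G)$ with $\phi(x)x^{-1} \in Z(G)$ for all $x \in G$. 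These are parametrized by homomorphisms $G/[G,G] \to Z(G)$ via $g \mapsto g \cdot f(g)$ (with invertibility cut out by a mild condition on $f$) and assemble into a subgroup of $\mathrm{Aut}(G)$ closely tied to $\mathrm{Hom}(G/[G,G], Z(G))$, providing a rich supply of candidate automorphisms whose local finiteness can be analyzed using the structure of $Z(G)$ itself.

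The next step is to reduce to $p$-groups via primary decomposition. The abelian locally finite group $Z(G)$ splits as $Z(G) = \bigoplus_p Z(G)_p$, and a cardinality argument identifies a prime $p$ for which $Z(G)_p$ contributes power $\lambda$ (if $\mathrm{cf}(\lambda) > \aleph_0$ some single component has power $\lambda$; otherwise a countable sum argument is needed). Working with automorphisms that respect the $p$-primary structure reduces the question to exhibiting a locally finite subgroup of power $\lambda$ inside $\mathrm{Aut}(P)$ for $P$ an abelian $p$-group of power $\lambda$. This is the ``problem on $p$-groups'' advertised in the abstract.

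For $\lambda = \aleph_0$, my plan is to use the structure theory of countable abelian $p$-groups. Writing $P = D \oplus R$ with $D$ divisible and $R$ reduced, if $D$ is infinite then $D \cong \bigoplus_{i < \omega} \mathbb{Z}(p^\infty)$, and the finite-support symmetric group $S^{\mathrm{fin}}_\omega$ acts on $D$ by permuting summands, producing a countable locally finite subgroup of $\mathrm{Aut}(P)$ (acting trivially on $R$). If instead $D$ is finite, then $R$ is an infinite reduced countable abelian $p$-group, and I would pass to a basic subgroup $B \subseteq R$: by Pr\"ufer, $B$ is a direct sum of cyclic $p$-groups, and infiniteness of $R$ forces some cyclic of order $p^k$ to occur infinitely often among the summands of $B$, again making permutations available as a source of a countable locally finite group. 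The main obstacle --- and, I suspect, the chief technical hurdle for the full conjecture --- is that permutations of summands of $B$ do not automatically extend to automorphisms of $R$, because $B$ is only pure in $R$, not a direct summand. Handling this extension step, presumably via the Ulm filtration of $R$ and a back-and-forth lifting together with the countability of $R$, is the delicate content of the $\aleph_0$ case, and explains why higher $\lambda$ (where back-and-forth typically fails) remains open.
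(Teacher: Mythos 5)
The statement you are addressing is a conjecture that the paper itself does not prove in full: what the paper actually establishes is the $\aleph_0$ case (Proposition~\ref{lemma_for_countable}) plus a reduction of the general case to a problem on abelian $p$-groups (Lemma~\ref{reduction_lemma}, via Fact~\ref{fact_for_conj_auto} and Lemma~\ref{lemma_for_conj_auto}), and your proposal correctly aims at the same scope with the same skeleton --- inner automorphisms dispose of the case $|G/Z(G)|=\lambda$ (as $\mathrm{Inn}(G)\cong G/Z(G)$ is locally finite of power $\lambda$), and otherwise the large centre is attacked through its primary decomposition. The genuine gap is in your reduction target. You reduce to ``a locally finite subgroup of power $\lambda$ inside $\mathrm{Aut}(P)$ for $P$ an abelian $p$-group of power $\lambda$,'' but the paper's reduced statement (Conjecture~\ref{conjecture_simplified}) concerns the \emph{relative} group $Aut_H(P)$ of automorphisms fixing pointwise a subgroup $H$ with $|H|<\lambda$, and this relativization is not cosmetic. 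When $|G/Z(G)|<\lambda$ one chooses $H\leq G$ of power $<\lambda$ with $G=\langle H\cup Z(G)\rangle_G$ (Fact~\ref{fact_for_conj_auto}); an automorphism $\pi$ of $Z(G)$, or of a primary component $G_p$, extends to $G$ by $yz\mapsto y\pi(z)$ only if this is well defined, and since $yz=y'z'$ forces $y^{-1}y'=zz'^{-1}\in H\cap Z(G)$, well-definedness requires exactly that $\pi$ fix $H\cap Z(G)$ (resp.\ $H_p=H\cap G_p$) pointwise --- this is the content of Lemma~\ref{lemma_for_conj_auto}. A locally finite subgroup of the absolute group $\mathrm{Aut}(P)$ therefore does not transport into $\mathrm{Aut}(G)$, and your fallback via central automorphisms does not repair this: the maps $g\mapsto gf(g)$ compose as a group in the transparent way only when $f$ kills $Z(G)$, i.e.\ $f$ factors through $(G/Z(G))^{\mathrm{ab}}$, a group of power $<\lambda$ which may even be trivial (e.g.\ $G/Z(G)$ perfect); and when $G$ is abelian the parametrization $\phi=\mathrm{id}+f$ simply restates the original problem.

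There is also a concrete false step in your $\aleph_0$ case: infiniteness of the reduced part $R$ does \emph{not} force some cyclic order $p^k$ to occur infinitely often in a basic subgroup --- take $R=B=\bigoplus_{n\geq 1}\mathbb{Z}(p^n)$, which is reduced, unbounded, countable, and has each order occurring exactly once. So in the unbounded reduced case your supply of permutations is empty, independently of the extension problem (from $B$ to $R$) that you flag; the paper instead invokes Fuchs for unbounded $p$-groups (Fact~\ref{J_fact}) and the Pr\"ufer decomposition for bounded ones (Fact~\ref{bounded_char}). Finally, your plan presupposes a single prime carrying power $\lambda$, but for $\lambda=\aleph_0$ the centre can be infinite with every primary component $G_p$ finite; the paper handles this case separately, observing that the set $P^*$ of primes with $[G_p:H_p]>2$ is infinite, so each $Aut_{H_p}(G_p)$ is a nontrivial finite group and their (restricted) product is an infinite locally finite subgroup --- note that even here the $Aut_{H_p}$ constraint is essential for the transport back into $\mathrm{Aut}(G)$. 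In short: right skeleton and right $\aleph_0$-plus-reduction ambition, but your reduced $p$-group problem is strictly weaker than the one actually needed, and the $\aleph_0$ argument rests on a false structural claim.
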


	In the case of $\aleph_0$ we prove that this is indeed the case:
	
	\begin{proposition}\label{lemma_for_countable} If $G \in \mathbf{K}^{\aleph_0}_{\mathrm{lf}}$, then $Aut(G)$ has a locally finite infinite subgroup.
\end{proposition}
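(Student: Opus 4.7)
The plan is a case analysis on the index $[G : Z(G)]$. If $G/Z(G)$ is infinite, we are done immediately: $\mathrm{Inn}(G) \cong G/Z(G)$ is an infinite subgroup of $Aut(G)$ and, being a quotient of the locally finite group $G$, is itself locally finite. So I henceforth assume $G/Z(G)$ is finite, set $A := Z(G)$ (an infinite countable abelian torsion group), choose a finite subgroup $F \leq G$ with $G = AF$ (take $F$ to be generated by a system of coset representatives of $A$ in $G$; it is finitely generated and so finite, by local finiteness), and let $H := A \cap F$.

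The key reduction is that any $\tau \in Aut(A)$ with $\tau|_H = \mathrm{id}$ extends to an automorphism $\widetilde\tau \in Aut(G)$ via $\widetilde\tau(zf) := \tau(z)f$ for $z \in A$ and $f \in F$; well-definedness uses $\tau|_H = \mathrm{id}$ and the homomorphism property uses centrality of $A$ in $G$. The assignment $\tau \mapsto \widetilde\tau$ is an injective group homomorphism, so it suffices to produce an infinite locally finite subgroup of $\{\tau \in Aut(A) : \tau|_H = \mathrm{id}\}$. Now decompose $A = \bigoplus_p A_p$ into its (characteristic) $p$-primary components and let $S := \{p : A_p \cap H \neq 0\}$, a finite set. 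If the set $\{p : A_p \neq 0\}$ is infinite, then for each $p \notin S$ with $A_p \neq 0$ the involution $\tau_p$ acting as $-\mathrm{id}$ on $A_p$ and as $\mathrm{id}$ on every other $A_q$ fixes $H$ pointwise, commutes with every $\tau_q$, and has order $2$; jointly the $\tau_p$'s generate an infinite elementary abelian $2$-subgroup of $Aut(A)$, which is locally finite.

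What remains, and what I expect to be the hard part (and likely what the abstract calls the ``reduction to $p$-groups''), is the subcase in which only finitely many $A_p$ are nontrivial and some $A_{p_0}$ is infinite. One is led there to the following question: for a countable infinite abelian $p$-group $P$ and a finite subgroup $H_0 \leq P$, does $Aut(P)$ have an infinite locally finite subgroup fixing $H_0$ pointwise? My plan is to split according as the socle $P[p]$ is infinite or finite: if $P[p]$ is infinite, one builds a copy of the finitary symmetric group inside $Aut(P[p])$ by permuting an $\mathbb{F}_p$-basis of a complement of $H_0 \cap P[p]$ in $P[p]$, and then lifts these symmetries to $Aut(P)$ via a structural decomposition of $P$; if $P[p]$ is finite then $P$ has finite $p$-rank, and one needs a finer analysis of the interplay between divisible and reduced summands of $P$, presumably leveraging the Ulm-invariant classification of countable abelian $p$-groups. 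This last subcase is the genuinely delicate one, and is where the hypothesis $\lambda = \aleph_0$ is expected to play an essential role.
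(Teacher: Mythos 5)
Your two completed cases are correct and in fact track the paper's own argument closely: the inner-automorphism step when $G/Cent(G)$ is infinite is identical, and your extension $\tau \mapsto \widetilde\tau$ from $Aut_H(A)$ to $Aut(G)$ is exactly the content of Fact~\ref{fact_for_conj_auto} combined with Lemma~\ref{lemma_for_conj_auto}, which the paper proves by the same ``choice of representation does not matter'' computation, prime by prime. Your commuting involutions dispose of the case corresponding to the paper's ``every $G_p$ finite'' (where the paper instead uses the set $P^*$ and the groups $Aut_{H_p}(G_p)$; two small remarks there: for $p=2$ the map $-\mathrm{id}$ can be trivial, so you should discard the even prime, as only one prime is lost; and one wants the direct \emph{sum} of the finite pieces, not the full product, to keep local finiteness). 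But what you have submitted is not a proof: the case of finitely many nonzero primary components with some $A_{p_0}$ infinite is explicitly left as a plan, and that case carries all the difficulty.

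Moreover, the guiding sub-question you pose there has a \emph{negative} answer in your ``genuinely delicate'' subcase, so the plan cannot be completed as formulated. If the socle $P[p]$ is finite, then $P = D \oplus F$ with $D$ divisible of finite rank and $F$ finite, and there is no finer structure to exploit: already for $P = G^*_{(p,\infty)}$ (with $H_0 = 0$) one has that $Aut(P)$ is isomorphic to the multiplicative group of units of the ring of $p$-adic integers, an abelian group whose torsion subgroup is finite (cyclic of order $p-1$ for odd $p$, of order $2$ for $p=2$); since every locally finite subgroup of an abelian group lies in its torsion subgroup, $Aut(P)$ has no infinite locally finite subgroup at all, and no analysis of Ulm invariants can produce one (the same holds for $D$ of any finite rank $n$, as finite subgroups of $GL_n(J_p)$ have bounded order). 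For comparison, at this exact point the paper splits into ``$G_p$ infinite and bounded'' --- handled via Fact~\ref{bounded_char}, which is sound and matches your finitary-symmetric-group idea --- and ``$G_p$ unbounded,'' where it cites Fact~\ref{J_fact}; but the additive group $J_p$ is torsion-free, so that embedding supplies an infinite subgroup, not a locally finite one. In other words, your gap sits precisely where the paper's own citation does not deliver local finiteness, and your finite-socle subcase shows the obstruction is genuine rather than a defect of your method. (Separately, even in your socle-infinite subcase the sketch needs repair: an automorphism of $P[p]$ need not extend to $P$ when $P$ is unbounded and reduced, because of height constraints; one should instead act on finite pure summands or on an infinite-rank divisible part, rather than ``lift'' socle permutations.)
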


	On the other hand, we do not settle here Conjecture~\ref{conjecture_on_auto_intro} in general, but we prove:
	
	\begin{lemma}\label{reduction_lemma} To prove Conjecture~\ref{conjecture_on_auto_intro} it suffices to prove Conjecture~\ref{conjecture_simplified}, where:
\end{lemma}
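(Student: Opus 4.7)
The plan is to peel $G$ down to an abelian $p$-group on which the simplified conjecture can be applied. I first dichotomize on the size of the central quotient. If $|G/Z(G)| = \lambda$, then $\mathrm{Inn}(G) \cong G/Z(G)$ is itself a locally finite subgroup of $\mathrm{Aut}(G)$ of power $\lambda$ and there is nothing left to do. So I may assume $|G/Z(G)| < \lambda$, which, since $|G| = \lambda$, forces $|Z(G)| = \lambda$.

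Now $Z(G)$ is a locally finite abelian group of power $\lambda$, and hence decomposes into its primary components $Z(G) = \bigoplus_{p} T_p$. Proposition~\ref{lemma_for_countable} disposes of $\lambda = \aleph_0$, so I assume $\lambda > \aleph_0$; since there are only countably many primes, one may (after passing to the largest primary summand, or for singular $\lambda$ to a cofinal direct sum of primary summands, for which the argument is identical) arrange to work with a single abelian $p$-group $T = T_p$ of power $\lambda$. Each such $T$ is a direct summand of $Z(G)$, so every automorphism of $T$ extends by the identity on the complementary primary parts to an automorphism of $Z(G)$. Applying Conjecture~\ref{conjecture_simplified} to the abelian locally finite $p$-group $T$ yields a locally finite subgroup $H \leq \mathrm{Aut}(T)$ of power $\lambda$.

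The remaining, and main, obstacle is lifting $H$ from $\mathrm{Aut}(Z(G))$ to $\mathrm{Aut}(G)$. An automorphism $\alpha$ of the central subgroup $Z(G)$ extends to an automorphism of $G$ that acts trivially on $G/Z(G)$ exactly when the map $z \mapsto \alpha(z)z^{-1}$ extends to a homomorphism $G \to Z(G)$, equivalently when $\alpha$ preserves the class of the central extension $1 \to Z(G) \to G \to G/Z(G) \to 1$. Since $|G/Z(G)| < \lambda$, this amounts to imposing fewer than $\lambda$ constraints on $\mathrm{Aut}(Z(G))$. Provided Conjecture~\ref{conjecture_simplified} produces $H$ in a form robust enough to survive passage to a subgroup of index $< \lambda$---which is the case if, as expected, $H$ is exhibited as a direct sum of $\lambda$ finite blocks, or more generally as a locally finite group of power $\lambda$ acting on $T$ with small support per element---the pull-back through the stabilizer of the extension class yields the desired locally finite subgroup of $\mathrm{Aut}(G)$ of power $\lambda$. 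I anticipate that Conjecture~\ref{conjecture_simplified} is formulated precisely so as to make this last step transparent, and it is this cocycle-preservation condition, rather than the $p$-group structure itself, that I expect to be the delicate technical point in the reduction.
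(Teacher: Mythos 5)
Your opening dichotomy ($|G/Cent(G)| = \lambda$ handled by inner automorphisms, otherwise $|Cent(G)| = \lambda$) and the primary decomposition of the center match the paper's strategy. But the step you yourself flag as ``the remaining, and main, obstacle'' --- lifting automorphisms of $Cent(G)$ to automorphisms of $G$ --- is where your argument has a genuine gap, and it is a gap the paper's formulation of Conjecture~\ref{conjecture_simplified} is specifically designed to avoid. You apply Conjecture~\ref{conjecture_simplified} to a primary component $T$ with no constraint subgroup, obtain a locally finite $H \leq Aut(T)$ of power $\lambda$, and then hope to cut $H$ down to the stabilizer of the extension class of $1 \to Cent(G) \to G \to G/Cent(G) \to 1$. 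Your justification --- that $|G/Cent(G)| < \lambda$ means ``fewer than $\lambda$ constraints,'' so the stabilizer should still have power $\lambda$ ``provided $H$ is robust enough'' --- is not an argument: a subgroup cut out by fewer than $\lambda$ conditions need not have power $\lambda$ (it can be trivial; a single cocycle condition can kill almost everything), and the conjecture as stated gives you no control over the form of $H$. The conditional clauses in your last paragraph are doing all the work, and nothing in the hypotheses supplies them.

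The paper's route eliminates the extension problem rather than solving it. By Fact~\ref{fact_for_conj_auto}, since $|G/Cent(G)| < \lambda$ there is $H \leq G$ with $|H| < \lambda$ and $G = \langle H \cup Cent(G) \rangle_G$. One then applies Conjecture~\ref{conjecture_simplified} in its \emph{relative} form, to the pairs $(G_p, H_p)$ with $H_p = H \cap G_p$, obtaining locally finite subgroups of $Aut_{H_p}(G_p)$ (cf. Notation~\ref{notation_fixpoint}); this is precisely why the conjecture carries the subgroup $H$ in its statement --- a point you noticed (``I anticipate that Conjecture~\ref{conjecture_simplified} is formulated precisely so as to make this last step transparent'') but did not exploit. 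Lemma~\ref{lemma_for_conj_auto} then shows that the restriction map $\pi \mapsto (\pi \restriction G_p : p \in P)$ is an isomorphism of $Aut_H(G)$ \emph{onto} $\prod_{p} Aut_{H_p}(G_p)$: given $\pi_p \in Aut_{H_p}(G_p)$, the explicit formula $\pi(y\, y_{p_1} \cdots y_{p_n}) = y\, \pi_{p_1}(y_{p_1}) \cdots \pi_{p_n}(y_{p_n})$ (for $y \in H$, $y_{p_\ell} \in G_{p_\ell}$) is well defined because any ambiguity between two representations of $g \in G$ lands in the groups $H_{p_\ell}$, which the $\pi_{p_\ell}$ fix pointwise. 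So every automorphism of a primary component fixing $H_p$ pointwise extends canonically to $G$ --- no cocycle or extension-class condition ever arises, because one fixes pointwise a small subgroup generating $G$ over the center instead of trying to act trivially on the quotient $G/Cent(G)$. To repair your proposal you would need to replace your stabilizer step by exactly this mechanism: choose $H$ first via Fact~\ref{fact_for_conj_auto}, and invoke the conjecture for $Aut_{H_p}(G_p)$ rather than for $Aut(T)$.
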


	\begin{conjecture}\label{conjecture_simplified} If $\lambda > \aleph_0$, $G \in \mathbf{K}^{\lambda}_{\mathrm{lf}}$ is an abelian $p$-group, $H \leq G$ and $|H| < \lambda$, then $Aut_H(G)$ has a locally finite subgroup of power~$\lambda$.
\end{conjecture}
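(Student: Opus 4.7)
The plan is to exploit the structure theorem $G = D \oplus R$, where $D$ is the maximal divisible subgroup of $G$ and $R$ is reduced, together with finitary permutation actions on direct summands. I will handle the divisible-heavy case and the reduced case with a large homogeneous summand by essentially the same elementary construction, and isolate the residual case as the main obstacle.

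\emph{Case A: $|D| = \lambda$.} Let $\pi_D : G \to D$ be the projection associated to the decomposition, and let $D_H$ be the divisible hull of $\pi_D(H)$ inside $D$. Since $|\pi_D(H)| \leq |H| < \lambda$, we get $|D_H| < \lambda$, and because $D_H$ is divisible it splits off $D = D_H \oplus D'$ with $|D'| = \lambda$ and $D' \cong \bigoplus_\lambda \mathbb{Z}_{p^\infty}$. I will let the finitary symmetric group $\mathrm{Sym}_\lambda^{\mathrm{fin}}$ act on $D'$ by permuting Prüfer summands, extended by the identity on $D_H \oplus R$. The resulting subgroup of $\mathrm{Aut}(G)$ is locally finite of power $\lambda$, and it fixes $H$ pointwise: for every $h = h_D + h_R \in H$ we have $h_D = \pi_D(h) \in D_H$, on which we act trivially.

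\emph{Case B with a large homogeneous summand.} If $|D| < \lambda$ (so $|R| = \lambda$) and $G$ has a direct summand $F \cong \bigoplus_\lambda \mathbb{Z}/p^k$ for some $k$ (detected, for instance, by a Ulm invariant $U_{k-1}(R) \geq \lambda$ together with a Kulikov-type extraction), then the same strategy goes through with Prüfer summands replaced by copies of $\mathbb{Z}/p^k$, using the projection $\pi_F: G \to F$ and a small direct-summand $F_H \leq F$ absorbing $\pi_F(H)$.

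\emph{Main obstacle: reduced $R$ with no large homogeneous summand.} The hard case is when $R$ is reduced of power $\lambda$ and every Ulm invariant $U_k(R)$ is $< \lambda$, so no $\bigoplus_\lambda \mathbb{Z}/p^k$ can be split off $G$. Under $\mathrm{cf}(\lambda) > \aleph_0$ this forces the basic subgroup $B$ of $R$ to have power $< \lambda$, making $R/B$ a large divisible $p$-group while $R$ itself is reduced; the short exact sequence $0 \to B \to R \to R/B \to 0$ then cannot split, and neither the finitary permutations of $B$ nor those of $R/B$ lift or descend to $\mathrm{Aut}(R)$ in sufficient quantity. Two plausible avenues: (i) for torsion-complete $R = \widehat{B}$, exhibit $\lambda$-many commuting diagonal or involution-type automorphisms of the large socle $R[p]$ and show they extend to $R$ via the $p$-adic topology; (ii) build automorphisms by transfinite induction along a pure filtration $R = \bigcup_{\alpha < \lambda} R_\alpha$ with $|R_\alpha| < \lambda$, extending a chosen finite-order automorphism at each stage while preserving the commutation relations needed for local finiteness. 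Dealing uniformly with all such reduced $p$-groups of power $\lambda$ is the technical heart of the statement---and, by Lemma~\ref{reduction_lemma}, the sole remaining obstruction to the full Conjecture~\ref{conjecture_on_auto_intro}.
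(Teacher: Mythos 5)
The statement you set out to prove is not proved in the paper at all: it is Conjecture~\ref{conjecture_simplified}, posed as an \emph{open} problem and serving only as the target of the reduction in Lemma~\ref{reduction_lemma} (which shows it implies Conjecture~\ref{conjecture_on_auto_intro}). So there is no proof in the paper to compare against, and your proposal does not close the gap either --- you say as much yourself. Your Cases A and B are fine as far as they go: splitting a small divisible hull $D_H$ (resp.\ a small support summand $F_H$ of a homogeneous bounded summand) off and letting the finitary symmetric group of degree $\lambda$ permute the remaining Pr\"ufer (resp.\ cyclic) summands does produce a locally finite subgroup of $Aut_H(G)$ of power $\lambda$. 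But the residual case is the entire content of the conjecture, and there you offer only two unexecuted ``avenues'': in (i) you do not show that finite-order automorphisms of the socle $R[p]$ extend to $R$ (for torsion-complete $R$ extension along the $p$-adic topology requires the socle maps to be induced by height-preserving data, which a bare involution of $R[p]$ need not be), and in (ii) the inductive step is precisely the known obstruction --- an automorphism of a pure subgroup $R_\alpha$ of a reduced $p$-group need not extend to $R_{\alpha+1}$, and nothing in your sketch controls this.

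There is also a concrete hole in the case analysis itself: your trichotomy is not exhaustive when $\mathrm{cf}(\lambda) = \aleph_0$. You use $\mathrm{cf}(\lambda) > \aleph_0$ to conclude that all Ulm invariants $U_k(R) < \lambda$ force the basic subgroup $B$ to have power $< \lambda$; but for, say, $\lambda = \aleph_\omega$ with $U_k(R) = \aleph_k$ one is in none of your cases --- there is no homogeneous summand of power $\lambda$ (Case B fails) and $|B| = \lambda$ (your ``main obstacle'' configuration fails). This subcase is in fact recoverable: for each $k$ the $k$-th homogeneous component $B_k$ of a basic subgroup is a direct summand of $R$, and the \emph{direct sum} $\bigoplus_k \Sigma_k$ of the finitary symmetric groups acting on the $B_k$ (after absorbing the image of $H$ into small subsummands, coordinatewise) is locally finite of power $\sup_k U_k = \lambda$ --- note that the full \emph{product} would not be locally finite, so the direct sum is essential. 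Even with that patch, however, the reduced case with all $U_k(R) < \lambda$ remains untouched, so the statement stays exactly where the paper leaves it: open.
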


	Finally, we would like to mention that in \cite{hall_gp} we give a close analysis of the group of automorphisms of Philip Hall's universal locally finite group.

\section{Definable $\aleph_1$-Half Graphs in Polish Groups}


	\begin{theorem}\label{stability_claim} Let $G$ be a Polish group, and for $\ell < \ell(*) < \omega$ let:
\begin{enumerate}[(i)]
	\item $\bar{\mathbf{g}}^\ell = (\bar{g}^\ell_{\alpha} : \alpha \in A_\ell)$;
	\item $A_{\ell} \in [\omega_1]^{\aleph_1}$;
	\item $\bar{g}^\ell_{\alpha} \in G^{n(\ell)}$;
	\item $\Delta$ be a finite set of q.f. formulas of the form $\varphi(\bar{x}^0_{n(0)}, ..., \bar{x}^{\ell(*)-1}_{n(\ell(*)-1)})$ in the language of group theory such that $lg(\bar{x}^\ell_{n(\ell)}) = n(\ell)$.
\end{enumerate}
Then there are $B_{\ell} \in [A_{\ell}]^{\aleph_1}$, for $\ell < \ell(*)$, and truth value $t \in \{0, 1\}$ such that if $\alpha(\ell) \in B_\ell$,  for $\ell < \ell(*)$, then:
$$G \models \varphi^t(\bar{g}^0_{\alpha(0)}, ..., \bar{x}^{\ell(*)-1}_{\alpha(\ell(*)-1)}).$$
\end{theorem}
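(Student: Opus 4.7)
The plan is to reduce to a single atomic equation and then apply a condensation-point argument inside the Polish product $G^{n(0)}\times\cdots\times G^{n(\ell(*)-1)}$. Since a quantifier-free formula is a Boolean combination of atomic formulas $w_j(\bar{x}^0,\dots,\bar{x}^{\ell(*)-1})=e$, stabilizing the truth value of each atomic sub-formula (of every $\varphi\in\Delta$) on some common uncountable product $\prod_\ell B_\ell$ automatically stabilizes every $\varphi\in\Delta$, and each such stabilization shrinks the $B_\ell$'s but preserves uncountability. It therefore suffices to prove: for a single equation $w(\bar{x}^0,\dots,\bar{x}^{\ell(*)-1})=e$ there are uncountable $B_\ell\subseteq A_\ell$ and $t\in\{0,1\}$ such that the equation has truth value $t$ everywhere on $\prod_\ell B_\ell$.

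For this atomic case I first preprocess each coordinate: for each $\ell<\ell(*)$, either some fiber of $\alpha\mapsto\bar{g}^\ell_\alpha$ is uncountable (in which case I restrict $A_\ell$ to that fiber and substitute the resulting constant into $w$, reducing the number of variables) or all fibers are countable (in which case I refine $A_\ell$ to an uncountable subset on which $\alpha\mapsto\bar{g}^\ell_\alpha$ is injective). After doing this for each $\ell$ I may assume every remaining coordinate is injective. Then set $Y_\ell:=\{\bar{g}^\ell_\alpha:\alpha\in A_\ell\}\subseteq G^{n(\ell)}$ and let $Y_\ell^{*}\subseteq Y_\ell$ be the set of points of $Y_\ell$ that are condensation points of $Y_\ell$; since $G^{n(\ell)}$ is hereditarily Lindel\"of, $Y_\ell\setminus Y_\ell^{*}$ is countable, and hence by injectivity the set $A'_\ell:=\{\alpha\in A_\ell:\bar{g}^\ell_\alpha\in Y_\ell^{*}\}$ is still uncountable.

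Let $f:\prod_\ell G^{n(\ell)}\to G$ be the continuous map induced by the word $w$. The key dichotomy is: either $f\equiv e$ on the whole product $\prod_\ell Y_\ell^{*}$, in which case $B_\ell:=A'_\ell$ works with $t=1$; or there is some $(\bar{k}^\ell)_\ell\in\prod_\ell Y_\ell^{*}$ with $f(\bar{k}^0,\dots,\bar{k}^{\ell(*)-1})\neq e$, in which case continuity and the Hausdorff property of $G$ produce open neighborhoods $U_\ell\ni\bar{k}^\ell$ with $f(\prod_\ell U_\ell)\not\ni e$, and then $B_\ell:=\{\alpha\in A'_\ell:\bar{g}^\ell_\alpha\in U_\ell\}$ is uncountable (each $U_\ell\cap Y_\ell$ being uncountable by the condensation-point property and $Y_\ell\setminus Y_\ell^{*}$ being countable) and yields $t=0$.

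The main obstacle I expect to isolate is the first horn of this dichotomy, which is where the Polish hypothesis on $G$ is truly used: only hereditary Lindel\"ofness guarantees that restricting to those $\alpha$ for which $\bar{g}^\ell_\alpha$ is already a condensation point preserves uncountability, and this is precisely what forces the homogeneous value $t=1$ whenever $w$ vanishes on the product of condensation-point sets. Once this observation is in hand, the iteration over atomic sub-formulas in $\Delta$, the handling of constant coordinates in the preprocessing step, and the second horn (via continuity of $f$) are all routine, and Theorem~\ref{theorem_half_1} follows by applying the conclusion to the pair of truth values forced by a putative definition of $\Gamma(\aleph_1)$.
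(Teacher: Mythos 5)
Your proposal is correct and is essentially the paper's own argument: the same reduction to a single equation $\sigma(\bar{x}^0,\dots,\bar{x}^{\ell(*)-1})=e$, followed by the same dichotomy --- either the word vanishes identically on the ``large'' part of each $A_\ell$ (giving $t=1$), or a failure point together with continuity of the word map and Hausdorffness traps uncountably many tuples in product neighborhoods where it fails (giving $t=0$). The only cosmetic difference is that the paper counts \emph{indices} rather than \emph{points}: its countable sets $\mathcal{U}_\ell$ of indices $\alpha$ for which some ball around $\bar{g}^\ell_\alpha$ catches only countably many $\beta \in A_\ell$ play exactly the role of your non-condensation points, which renders your injectivity/constant-fiber preprocessing unnecessary.
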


	\begin{proof} First of all notice that it suffices to prove the claim for:
	$$\Delta = \{ \sigma(\bar{x}^0_{n(0)}, ..., \bar{x}^{\ell(*)-1}_{n(\ell(*)-1)}) = e \},$$
and $\sigma(\bar{x}^0_{n(0)}, ..., \bar{x}^{\ell(*)-1}_{n(\ell(*)-1)})$ a term in the language of group theory $L = \{ e, \cdot, ()^{-1}\}$.
\newline [Why? First of all, without loss of generality, we can assume that each $\varphi \in \Delta$ is a Boolean combination of formulas of the form $\sigma(\bar{x}^0_{n(0)}, ..., \bar{x}^{\ell(*)-1}_{n(\ell(*)-1)}) = e$. So let $(\sigma_i(\bar{x}^0_{n(0)}, ..., \bar{x}^{\ell(*)-1}_{n(\ell(*)-1)}) = e : i < i(*) < \omega)$ list them. Now choose $(B_{i, \ell} : \ell < \ell(*))$ by induction on $i \leq i*$ such that:
\begin{enumerate}[(a)]
	\item $B_{i, \ell} \in [\omega_1]^{\aleph_1}$;
	\item $B_{0, \ell} = A_{\ell}$;
	\item $B_{i+1, \ell} \subseteq B_{i, \ell}$;
	\item $(B_{i+1, \ell} : \ell < \ell(*))$ satisfies the desired conclusion for:
	$$\Delta = \{ \sigma_i(\bar{x}^0_{n(0)}, ..., \bar{x}^{\ell(*)-1}_{n(\ell(*)-1)}) = e  \}.$$
\end{enumerate}
Then $(B_{i(*), \ell} : \ell < \ell(*))$ is as wanted.]
\newline Let $(G, d)$ witness the Polishness of $G$. For $\ell < \ell(*)$ and $\alpha, \beta \in A_{\ell}$, let $d(\bar{g}^\ell_{\alpha}, \bar{g}^\ell_{\beta}) = max\{ d(g^{\ell, i}_{\alpha}, g^{\ell, i}_{\beta}) : i < n(\ell)\}$, and:
	$$\mathcal{U}_\ell = \{ \alpha < \omega_1 : \text{ for some } \varepsilon \in (0, 1)_{\mathbb{R}} \text{ the set } \{ \beta \in A_\ell : d(\bar{g}^\ell_{\alpha}, \bar{g}^\ell_{\beta}) < \varepsilon \} \text{ is countable}\}.$$
Since $(G, d)$ is separable, for every $\ell < \ell(*)$, the set $\mathcal{U}_{\ell}$ is countable, and so we can find $\alpha(*) < \omega_1$ such that $\bigcup_{\ell < \ell(*)} \mathcal{U}_\ell \subseteq \alpha(*)$. Now, if $B_{\ell} = A_\ell - \alpha(*)$ is such that for every $\alpha({\ell}) \in B_{\ell}$ we have that $G \models \sigma(\bar{g}^0_{\alpha(0)}, ..., \bar{g}^{\ell(*)-1}_{\alpha(\ell(*)-1)}) = e$, then we are done. So suppose that this is not the case, then we can find $\alpha_{\ell} \in A_{\ell} - \alpha(*)$ such that:
$$\varepsilon = d(\sigma(\bar{g}^0_{\alpha(0)}, ..., \bar{g}^{\ell(*)-1}_{\alpha(\ell(*)-1)}), e) \neq 0.$$
As $G$ is Polish, there is $\xi \in (0, 1)_{\mathbb{R}}$ such that:
\begin{equation}\label{equation_1}
\text{ if } \bar{a}_\ell \in G^{n(\ell)} \text{ and } d(\bar{a}_{\ell}, \bar{g}^\ell_{\alpha(\ell)}), \text{ then } d(\sigma(\bar{a}^0_{0}, ..., \bar{a}^{\ell(*)-1}_{\ell(*)-1}), e) > \varepsilon/2.
\end{equation} 
Now, for $\ell < \ell(*)$, let $B_{\ell} = \{ \alpha \in A_{\ell} : d(\bar{g}^\ell_{\alpha}, \bar{g}^\ell_{\alpha_{\ell}}) < \xi \}$. Then $B_{\ell} \subseteq A_{\ell}$ and, as $\alpha(*) \leq \alpha_{\ell}$, clearly $|B_{\ell}| = \alpha_1$. Hence, by (\ref{equation_1}), $(B_{\ell} : \ell < \ell(*))$ is as required.
\end{proof}

\begin{theorem1}\label{theorem_half} No quantifier-free formula $\varphi(\bar{x}, \bar{y})$ in the language of group theory can define the $\aleph_1$-half graph in a Polish group $G$.
\end{theorem1}

	\begin{proof} Immediate from Theorem~\ref{stability_claim}.
\end{proof}

\section{The Space of Automorphism Groups of a Borel Complete Class}

%

	For an overview (and careful explanation) of the descriptive set theoretic notions occurring in this section cf. e.g. \cite[Chapter 11]{gao_invariant}. 

	\begin{notation}\label{notation_class_graphs1} We denote by $\mathbf{K}^{\aleph_0}_{\mathrm{gf}}$ the standard Borel space of graphs with domain~$\aleph_0$.
\end{notation}

	\begin{definition}\label{borel_complete} Let $\mathbf{K}$ be a Borel class of $L$-structures with domain $\aleph_0$. We say that $\mathbf{K}$ is {\em Borel complete} if there exists a Borel map $f: \mathbf{K}^{\aleph_0}_{\mathrm{gf}} \rightarrow \mathbf{K}$ such that for every $A, B \in \mathbf{K}_{0}$ we have $A \cong B$ if and only if $f(A) \cong f(B)$.
\end{definition}

\begin{notation}\label{top_iso} Let $G$ and $H$ be topological groups.
	\begin{enumerate}[(1)]
	\item We write $G \cong H$ to mean that $G$ and $H$ are isomorphic as abstract groups.
	\item We write $G \cong_t H$ to mean that $G$ and $H$ are isomorphic as topological groups.
	\end{enumerate}
\end{notation}

	\begin{notation}\label{notation_class_graphs} (As in Notation~\ref{notation_locally_finite}) Given a class $\mathbf{K}$ of $L$-structures, we let:
	$$Aut(\mathbf{K}) = \{ Aut(A) : A \in \mathbf{K} \}.$$
\end{notation}

	\begin{proposition2}\label{theorem2} Let $\mathbf{K}$ be a Borel class of $L$-structures with domain $\aleph_0$ such that for every $G \in Aut(\mathbf{K})$ we have that $|G| \leq \aleph_0$.  Then the isomorphism relation on $\mathbf{K}$ {\em is} Borel, and so in particular $\mathbf{K}$ is {\em not} Borel complete.
\end{proposition2}

	\begin{proof} We show that for any such class $\mathbf{K}$ the isomorphism relation $\cong$ on $\mathbf{K}$ is Borel. Notice that for $A, B \in \mathbf{K}$ we have that $A \cong B$ if and only if there are countably many $f \in S_{\infty} := \{ f: \omega \rightarrow \omega : f \text{ is a bijection} \}$ such that $f: A \cong B$. Thus, the relation $\cong$ on $\mathbf{K}$ is the projection of a Borel relation $R$:
	 $$(\mathbf{K} \times \mathbf{K}) \times S_\infty \supseteq R = \{ (A, B, f) : f: A \cong B\}$$ 
with countable sections $R_{(A, B)}$ (for $(A, B) \in \mathbf{K} \times \mathbf{K}$). Hence, by \cite[Lemma 18.12]{kech}, the relation $\cong$ on $\mathbf{K}$ is Borel, and so we are done.
\end{proof}

	We are interested in the following open problem:

\begin{problem}\label{problem_sec2} Let $\mathbf{K}$ be a Borel complete class. 
	\begin{enumerate}[(1)]
	\item Can $Aut(\mathbf{K})/\cong$ have size $1$? Can $Aut(\mathbf{K})/\cong_t$ have size $1$?
	\item Can $Aut(\mathbf{K})/\cong$ be finite? Can $Aut(\mathbf{K})/\cong_t$ be finite?
	\item Can $Aut(\mathbf{K})/\cong$ be countable? Can $Aut(\mathbf{K})/\cong_t$ be countable?
\end{enumerate}		
\end{problem}

\section{Groups of Automorphisms of Locally Finite Groups}

	\begin{definition}\label{def_pgroups}
	\begin{enumerate}[(1)]
	\item We denote by $P$ the set of prime numbers.
	\item For $p \in P$, we denote by $G^*_{(p, \infty)}$ the divisible abelian $p$-group of rank $1$.
	\item For $p \in P$ and $\ell < \omega$ we denote by $G^*_{(p, \ell)}$ the finite cyclic group of order~$p^\ell$.
	\item For $p \in P$, we let $S_p = \{ (p, \ell): \ell < \omega \}$ and $S^+_p = S_p \cup \{ (p, \infty) \}$.
	\item For $s \in S^+_p$ and $\lambda$ a cardinal, we let $G^*_{s, \lambda}$ be the direct sum of $\lambda$ copies of $G^*_s$.
	\item\label{def_J} For $p \in P$, we denote by $J_p$ the group of $p$-adic integers.
	\item We say that an abelian group $G$ is bounded if there exists $n < \omega$ such that for every $g \in G$ we have $n g = 0$.
	\item We say that $G$ is unbounded if it is not bounded.
	\item We say that $G$ is torsion if every element of $G$ has finite order.
	\end{enumerate}
\end{definition}

	\begin{fact}[\cite{fuchs}{[Theorem 17.2]}]\label{pre_fuchs_fact2}  Let $G$ be a bounded abelian group. Then $G$ is a direct sum of cyclic groups.
\end{fact}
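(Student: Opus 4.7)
The plan is to reduce to a bounded abelian $p$-group via the standard primary decomposition, and then induct on the exponent. Let $n$ be minimal with $nG = 0$ and factor $n = p_1^{e_1}\cdots p_k^{e_k}$. For any torsion abelian group of finite exponent, B\'{e}zout's identity gives $G = \bigoplus_{j=1}^{k} G[p_j^{e_j}]$, where $G[m] := \{g \in G : mg = 0\}$, and each $G[p_j^{e_j}]$ is a bounded abelian $p_j$-group. So it suffices to treat the case of a $p$-group $G$ with $p^n G = 0$.

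I would then induct on $n$. For $n = 1$, $G$ is an $\mathbb{F}_p$-vector space, and a Hamel basis gives the desired decomposition into copies of $\mathbb{Z}/p\mathbb{Z}$. For $n > 1$, I would apply the induction hypothesis to $pG$ (which satisfies $p^{n-1}(pG) = 0$) to obtain $pG = \bigoplus_{i \in I} \langle h_i \rangle$ with each $h_i$ of order $p^{k_i}$, $k_i \leq n-1$. Choose lifts $g_i \in G$ with $pg_i = h_i$, so $g_i$ has order $p^{k_i + 1}$. The elements $p^{k_i} g_i$ are nonzero elements of the socle $G[p] := \{g \in G : pg = 0\}$; I would extend them to an $\mathbb{F}_p$-basis $\{p^{k_i} g_i\}_{i \in I} \cup \{f_j\}_{j \in J}$ of $G[p]$. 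The candidate decomposition is $G = \bigoplus_{i \in I} \langle g_i \rangle \oplus \bigoplus_{j \in J} \langle f_j \rangle$.

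The main task, and the main obstacle, is to verify both independence and spanning of this candidate. For independence, a relation $\sum_i n_i g_i + \sum_j m_j f_j = 0$ can be multiplied by a suitable power of $p$ to land in the socle $G[p]$, where the chosen $\mathbb{F}_p$-basis forces the corresponding coefficients to be divisible by higher powers of $p$; iterating this layer by layer forces $p^{k_i+1} \mid n_i$ and $p \mid m_j$ for all $i,j$, which is exactly what is required. For spanning, given $g \in G$, the element $pg$ lies in the already decomposed $pG$, so $pg = \sum_i n_i h_i = p\bigl(\sum_i n_i g_i\bigr)$ for appropriate integers $n_i$, whence $g - \sum_i n_i g_i$ is $p$-torsion, hence lies in $G[p]$, which is spanned by the chosen basis by construction. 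Everything else is routine bookkeeping once these two verifications are carried out; the delicate point is keeping track of orders of elements under multiplication by powers of $p$, which is why a layered argument by height in $G$ is natural.
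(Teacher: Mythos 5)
Your proof is essentially correct, but the paper itself contains no proof of this statement: it is quoted as a known result (the Pr\"ufer--Baer theorem) with a citation to \cite{fuchs}, Theorem 17.2, so the real comparison is with the cited source. Your route --- primary decomposition via B\'ezout to reduce to a bounded $p$-group, then induction on the exponent $n$ with $p^nG = 0$, lifting a cyclic decomposition of $pG$ and completing with an $\mathbb{F}_p$-basis of the socle $G[p]$ --- is the classical direct proof, entirely self-contained modulo the axiom of choice (vector-space bases and the choice of lifts $g_i$). Fuchs instead deduces Theorem 17.2 from Kulikov's criterion (his Theorem 17.1: a $p$-group is a direct sum of cyclic groups iff it is the union of an ascending chain of subgroups on each of which the heights of nonzero elements are bounded), applied with the constant chain $G_n = G$; that machinery is heavier than your induction but buys considerably more, e.g.\ Pr\"ufer's theorem for countable $p$-groups without elements of infinite height and the fact that subgroups of direct sums of cyclic groups are again such. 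Two small points to tighten in your write-up. First, before ``extending to a basis'' you must observe that the set $\{p^{k_i}g_i\}_{i \in I}$ is itself $\mathbb{F}_p$-independent; this is immediate, since $p^{k_i}g_i = p^{k_i-1}h_i$ is a nonzero element of the $i$-th summand of the direct sum $pG = \bigoplus_{i \in I}\langle h_i\rangle$, but it is a hypothesis of the extension step and should be stated. Second, your independence verification needs no ``layer by layer'' iteration: multiplying a relation $\sum_i n_i g_i + \sum_j m_j f_j = 0$ once by $p$ gives $\sum_i n_i h_i = 0$, hence $p^{k_i} \mid n_i$ by directness in $pG$; writing $n_i = p^{k_i}n_i'$ places the original relation inside $G[p]$, where the chosen basis forces $p \mid n_i'$ and $p \mid m_j$, so $p^{k_i+1} \mid n_i$ and every term vanishes. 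With those two sentences added, your argument is a complete and correct alternative to the cited proof.
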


	\begin{fact}\label{bounded_char} If an abelian $p$-group $G$ is bounded, then there exists $n < \omega$ such that:
	$$G = \bigoplus_{\ell < n} G^*_{(p, \ell), \lambda_\ell}.$$
\end{fact}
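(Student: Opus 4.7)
The plan is to apply Fact~\ref{pre_fuchs_fact2} to express $G$ as a direct sum of cyclic groups, and then use the additional structure (being a $p$-group and bounded) to control which cyclic groups can occur.

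First, by Fact~\ref{pre_fuchs_fact2}, write $G = \bigoplus_{i \in I} C_i$ with each $C_i$ cyclic. Since $G$ is a $p$-group, every element has order a power of $p$; in particular no $C_i$ is infinite cyclic, so each $C_i$ is a finite cyclic $p$-group, i.e.\ $C_i \cong G^*_{(p, \ell_i)}$ for some $\ell_i < \omega$.

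Next I would use boundedness to uniformly bound the $\ell_i$. By assumption there exists $n < \omega$ with $n g = 0$ for all $g \in G$. Write $n = p^k m$ with $\gcd(m, p) = 1$. Since $G$ is a $p$-group, multiplication by $m$ is a bijection $G \to G$ (on each cyclic summand $G^*_{(p, \ell_i)}$ it is an automorphism, being coprime to the order), so from $nG = 0$ we get $p^k G = 0$. Hence each $\ell_i \leq k$.

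Finally, collect summands of the same isomorphism type: for each $\ell \leq k$ set $\lambda_\ell = |\{ i \in I : \ell_i = \ell \}|$. Then
\[
G \;=\; \bigoplus_{\ell \leq k} \bigoplus_{\{i : \ell_i = \ell\}} C_i \;\cong\; \bigoplus_{\ell \leq k} G^*_{(p, \ell), \lambda_\ell},
\]
which is the desired decomposition (with $n := k+1$ in the notation of the statement). There is no genuine obstacle here; the only mildly nontrivial step is the passage from a general bound $n$ to a $p$-power bound $p^k$, which rests on the fact that integers prime to $p$ act invertibly on a $p$-group.
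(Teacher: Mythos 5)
Your proof is correct and takes essentially the same approach as the paper, which derives Fact~\ref{bounded_char} directly as a consequence of Fact~\ref{pre_fuchs_fact2}. The paper leaves implicit precisely the routine verifications you supply (each cyclic summand of a $p$-group is a cyclic $p$-group, and the bound $n = p^k m$ can be replaced by $p^k$ since integers prime to $p$ act invertibly), so your write-up is just the expanded version of the paper's one-line argument.
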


	\begin{proof} This is a consequence of Fact \ref{pre_fuchs_fact2}.
\end{proof}

	\begin{fact}[\cite{fuchs}{[Theorem 8.4]}]\label{p_part} Let $G$ be a torsion abelian group. Then:
	$$G = \bigoplus_{p \in P} G_p,$$
with $G_p$ a $p$-group, for every $p \in P$.
\end{fact}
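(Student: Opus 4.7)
The result is a classical theorem of abelian group theory (the primary decomposition of a torsion abelian group). My plan is to define each $p$-primary component, show via a Bezout argument that these components span $G$, and then verify directness using coprimality of orders.

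First, for each prime $p \in P$ I would define
$$G_p := \{ g \in G : p^k g = 0 \text{ for some } k < \omega\}.$$
That $G_p \leq G$ is immediate from commutativity: if $p^k g = 0$ and $p^\ell h = 0$, then $p^{\max(k,\ell)}(g - h) = 0$.

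Second, I would show $G = \sum_{p \in P} G_p$. Given $g \in G$, since $G$ is torsion, $g$ has some finite order $n$; write $n = p_1^{a_1} \cdots p_m^{a_m}$ with the $p_i$ distinct, and set $n_i = n/p_i^{a_i}$. Then $\gcd(n_1, \ldots, n_m) = 1$, so by Bezout there exist integers $c_1, \ldots, c_m$ with $\sum_i c_i n_i = 1$. Multiplying by $g$, we get $g = \sum_i c_i n_i g$, and each summand satisfies $p_i^{a_i}(c_i n_i g) = c_i n g = 0$, so $c_i n_i g \in G_{p_i}$.

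Third, I would verify directness: suppose $\sum_{i=1}^{m} g_i = 0$ with $g_i \in G_{p_i}$ and the $p_i$ distinct. Fix $j$ and set $N_j = \prod_{i \neq j} \mathrm{ord}(g_i)$, a product of powers of primes different from $p_j$. Then $N_j g_i = 0$ for every $i \neq j$, so $N_j g_j = 0$. Since $\mathrm{ord}(g_j)$ is a power of $p_j$, we have $\gcd(N_j, \mathrm{ord}(g_j)) = 1$, and another application of Bezout forces $g_j = 0$. Since $j$ was arbitrary, every summand vanishes, so the sum is direct and $G = \bigoplus_{p \in P} G_p$. There is no real obstacle here; the argument is purely bookkeeping with Bezout, and no deep ideas beyond commutativity and the torsion hypothesis are required.
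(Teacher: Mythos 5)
Your proof is correct: the definition of the $p$-primary components, the Bezout argument for $G = \sum_p G_p$, and the coprimality argument for directness are all standard and carried out without gaps. The paper itself gives no proof at all --- it simply cites Fuchs [Theorem~8.4] --- and your argument is exactly the classical primary-decomposition proof found there, so there is nothing to compare beyond noting that you have filled in the standard textbook argument the authors chose to quote.
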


	\begin{remark}[\cite{fuchs2}{[pg. 250]}]\label{auto_decomposition_fact} Let $G$ be an abelian group and suppose that $G = \bigoplus_{i \in I} G_i$, then $\bigoplus_{i \in I} Aut(G_i)$ can be embedded into $Aut(G)$.
\end{remark}

	\begin{fact}[{\cite[Theorem~115.1]{fuchs2}}]\label{J_fact} Let $G$ be an unbounded abelian $p$-group, then there exists an embedding $f: J_p \rightarrow Aut(G)$.
\end{fact}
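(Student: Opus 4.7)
The plan is to exploit the natural $J_p$-module structure on any abelian $p$-group $G$, together with the embedding of $(J_p,+)$ into the multiplicative group of units $J_p^{\times}$.

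First, I would equip $G$ with the structure of a $J_p$-module. Given $\alpha \in J_p$ and $g \in G$, pick $n < \omega$ with $p^n g = 0$ and define $\alpha \cdot g := (\alpha \bmod p^n)\, g$. This is well-defined (independent of $n$ since $p^n g = 0$ implies $p^m g = 0$ for $m \geq n$), extends the $\mathbb{Z}$-action, and is distributive in both variables. Consequently, each unit $u \in J_p^{\times}$ acts on $G$ by a $J_p$-linear bijection $\mu_u : g \mapsto u \cdot g$, and $u \mapsto \mu_u$ defines a group homomorphism $\mu : J_p^{\times} \to Aut(G)$.

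Second, I would identify a copy of $(J_p,+)$ sitting multiplicatively inside $J_p^{\times}$. For $p$ odd the principal units $U := 1 + p J_p$ form a subgroup of $J_p^{\times}$ which, via the $p$-adic logarithm/exponential, is isomorphic as an abstract group to $(J_p,+)$; for $p = 2$ one uses instead $U := 1 + 4 J_2$. Fixing such an isomorphism $\iota : J_p \xrightarrow{\sim} U$ and composing with $\mu$ produces the candidate homomorphism $f := \mu \circ \iota : J_p \to Aut(G)$.

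Third, I would verify injectivity, which is where the unboundedness hypothesis enters. Suppose $f(\alpha) = \mathrm{id}_G$ for some $\alpha \in J_p$ and set $u := \iota(\alpha) \in U$; then $(u-1) \cdot g = 0$ for every $g \in G$. Because $G$ is unbounded, for each $n < \omega$ there is $g_n \in G$ whose order in $G$ is at least $p^n$, which forces $p^n \mid (u-1)$ in $J_p$ for all $n$. Hence $u = 1$ in $J_p$, so $\alpha = 0$ by injectivity of $\iota$, and $f$ is an embedding.

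The main point (rather than a serious obstacle) is really just the correct choice of the multiplicative subgroup of $J_p^{\times}$ that is additively isomorphic to $J_p$, and the case split at $p=2$; once the $J_p$-module structure is in place, injectivity is an immediate consequence of unboundedness, since an abelian $p$-group annihilated by a nonzero $p$-adic integer $u-1 = p^k v$ with $v \in J_p^{\times}$ would have exponent dividing $p^k$ and therefore be bounded.
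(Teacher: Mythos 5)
Your proof is correct and complete: the canonical $J_p$-module structure on an abelian $p$-group is well defined exactly as you describe, multiplication by a principal unit is an automorphism, the isomorphisms $(J_p,+) \cong 1+pJ_p$ for $p$ odd and $(J_2,+) \cong 1+4J_2$ are standard, and your injectivity step is sound, since $(u-1)\cdot g = 0$ for an element $g$ of order $p^m$ forces $u-1 \in p^m J_p$, so unboundedness gives $u-1 \in \bigcap_{n<\omega} p^n J_p = 0$. The paper offers no proof of this statement --- it is quoted from \cite{fuchs2} --- and your argument is essentially the standard textbook route behind that citation, so there is nothing to compare beyond noting that you have supplied a self-contained proof of the quoted fact.
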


	\begin{lemma}\label{element_finite_order_in_aut} Let $G \in \mathbf{K}^{\lambda}_{\mathrm{lf}}$ (cf. Definition~\ref{notation_locally_finite}). Then $Aut(G)$ has a non-trivial locally finite subgroup.
\end{lemma}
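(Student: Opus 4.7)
The plan is to split into two cases based on whether $G$ is abelian, and in each case exhibit a concrete non-trivial locally finite subgroup of $Aut(G)$.

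If $G$ is non-abelian, I would take the inner automorphism group $\mathrm{Inn}(G) \cong G/Z(G) \leq Aut(G)$. Since $Z(G) \neq G$, this quotient is non-trivial. Moreover, since $G$ is locally finite, any finitely generated subgroup of $G/Z(G)$ is the image of a finitely generated subgroup of $G$, which is finite; hence $\mathrm{Inn}(G)$ is locally finite. This yields the desired subgroup immediately.

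If $G$ is abelian, then being locally finite forces $G$ to be torsion abelian. I would consider the inversion map $\iota \colon x \mapsto -x$, which is an automorphism of order at most $2$. Whenever $G$ contains an element of order greater than $2$, $\iota$ is non-trivial, and $\langle \iota \rangle$ is a finite non-trivial subgroup of $Aut(G)$. The only remaining situation is that $G$ has exponent $2$, so $G$ is an infinite $\mathbb{F}_2$-vector space; here I would pick an $\mathbb{F}_2$-basis of $G$ (which has at least two elements since $|G| \geq \aleph_0$) and observe that a single transposition of basis vectors extends uniquely to a non-trivial order-$2$ automorphism of $G$.

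There is no real obstacle. The only mild subtlety is the exponent-$2$ abelian case, where the inversion trick fails because $-x = x$ for every element; it is resolved by the basis-permutation argument. All the heavier machinery recalled just before the lemma (the embedding $J_p \hookrightarrow Aut(G)$ from Fact~\ref{J_fact}, the decomposition $G = \bigoplus_p G_p$ from Fact~\ref{p_part}, and Remark~\ref{auto_decomposition_fact}) is not needed for this lemma but will presumably be invoked for the stronger statements targeting Conjecture~\ref{conjecture_on_auto_intro}.
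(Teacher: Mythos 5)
Your proof is correct, and in the abelian case it takes a genuinely different --- and considerably more elementary --- route than the paper. The non-abelian case coincides exactly with the paper's case (i): $\mathrm{Inn}(G) \cong G/Cent(G)$ is non-trivial and locally finite, since quotients of locally finite groups are locally finite. For abelian $G$, however, the paper does not use inversion at all: it splits into the unbounded and bounded cases and invokes Fuchs' structure theory, namely the primary decomposition $G = \bigoplus_{p} G_p$ (Fact~\ref{p_part}), the embedding of $J_p$ into the automorphism group of an unbounded $p$-group (Fact~\ref{J_fact}) together with Remark~\ref{auto_decomposition_fact}, and, in the bounded case, the decomposition into cyclic summands (Fact~\ref{bounded_char}), from which it builds the group of automorphisms preserving each summand (respectively, each pair of summands when $p = 2$). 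Your inversion-or-basis-transposition argument replaces all of this with a single automorphism of order $2$, which suffices because the lemma asks only for a non-trivial locally finite subgroup; it even sidesteps a soft spot in the paper's case (ii), since the additive group $J_p$ is torsion-free, so the embedding $J_p \hookrightarrow Aut(G)$ does not by itself exhibit any non-trivial locally finite subgroup (one needs, e.g., the torsion units of the $p$-adic ring acting by multiplication). What the paper's heavier route buys is quantity rather than mere existence: the summand-preserving construction in its case (iii) yields a large (indeed, potentially power-$\lambda$) locally finite subgroup, and the same toolkit ($p$-primary decomposition, $Aut_{H_p}(G_p)$, Fact~\ref{J_fact}) is reused almost verbatim in the proofs of Proposition~\ref{lemma_for_countable} and Lemma~\ref{reduction_lemma}, in service of Conjecture~\ref{conjecture_on_auto_intro}. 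So your closing guess is right in spirit --- the machinery is really aimed at the later, stronger statements --- except that the paper chooses to deploy it already in this lemma, whereas your argument shows the lemma itself needs none of it.
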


	\begin{proof} We distinguish three cases:
	\begin{enumerate}[(i)]
	\item $G$ is not abelian.
	\item $G$ is abelian and not bounded.
	\item $G$ is abelian and bounded. 
	\end{enumerate}
If (i), then $G/Cent(G) \in \mathbf{K}_{\mathrm{lf}}$ is non-trivial and it can be embedded into $Aut(G)$, and so we are done. If (ii), then we are done by Facts \ref{p_part} and \ref{J_fact}, and Remark \ref{auto_decomposition_fact}. Finally, if (iii), then by Facts \ref{bounded_char} and \ref{p_part}, there exists a direct summand $G_p$ of $G$ such that $G_p = \bigoplus_{\ell < n} G^*_{(p, \ell), \lambda_\ell}$ and for some $0 < \ell(*) < n < \omega$ we have $\lambda_{\ell(*)} \geq \aleph_0$. Let $G_{p^{\ell}}(\alpha)$ be the $\alpha$-th copy of $G^*_{(p, \ell), \lambda_\ell}$. If $p > 2$, then consider:
	$$\{ \pi \in Aut(G_p) : \pi \text{ maps } G_{p^{\ell}}(\alpha) \text{ onto itself, for every } \alpha < \lambda_{\ell} \text{ and } \ell < n \}.$$
	If $p = 2$, then consider:
	$$\{ \pi \in Aut(G_p) : \pi \text{ maps } G_{p^{\ell(*)}}(2\alpha) \oplus G_{p^{\ell(*)}}(2\alpha + 1) \text{ onto itself, for every } \alpha < \lambda_{\ell(*)}\}.$$
Hence, by Remark \ref{auto_decomposition_fact}, also (iii) is taken care of.
\end{proof}

	The following facts are folklore:
	
	\begin{fact}\label{auto_order}
	\begin{enumerate}[(1)]
	\item If $M$ is a linear order, then $Aut(M)$ has no element of finite order.
	\item For every infinite structure $M$ there exists a graph $\Gamma_M$ of the same cardinality of $M$ such that $Aut(\Gamma_M) \cong Aut(M)$.	
\end{enumerate}
\end{fact}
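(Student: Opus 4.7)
For part (1), the plan is almost immediate from the fact that every automorphism of a linear order is strictly increasing. I would take any non-identity $\sigma \in Aut(M)$, pick $x \in M$ with $\sigma(x) \neq x$, and replace $\sigma$ by $\sigma^{-1}$ if necessary so that $\sigma(x) > x$. Applying the order-preserving map $\sigma$ to this inequality gives $\sigma^{n+1}(x) > \sigma^n(x)$ for every $n \geq 0$, so by induction the sequence $(\sigma^n(x))_{n < \omega}$ is strictly increasing, and in particular $\sigma^n(x) \neq x$ for every $n \geq 1$. Hence $\sigma$ has infinite order. No obstacle is expected here.

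For part (2), I would use the standard Hedrlín--Pultr style encoding of an arbitrary relational (and hence by Skolemization, arbitrary) structure as a graph via rigid gadgets. Assuming without loss of generality that the language $L$ of $M$ is purely relational (replacing functions and constants by their graphs), the plan is to build $\Gamma_M$ as follows: take one ``element'' vertex $v_a$ for each $a \in M$; for each relation symbol $R \in L$ of arity $n$ and each tuple $\bar a = (a_1,\dots,a_n) \in R^M$, introduce a fresh ``tuple'' vertex $v_{R,\bar a}$; and connect $v_{R,\bar a}$ to $v_{a_i}$ through a rigid finite gadget whose isomorphism type encodes both the symbol $R$ and the position $i$ (for instance, subdivided paths of pairwise distinct lengths, with a further tag attached at $v_{R,\bar a}$ whose shape depends only on $R$).

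Using a different ``species'' of tag for element vertices, tuple vertices, and each gadget type, one verifies that any $\pi \in Aut(\Gamma_M)$ must permute the element vertices among themselves, the tuple vertices among themselves, and must respect the position labels inside each gadget. This yields a bijection $\sigma \colon M \to M$, and the invariance of the ``$R$-type'' tag at each $v_{R,\bar a}$ together with the position-preservation inside the gadgets forces $\sigma$ to be an $L$-automorphism of $M$, giving a group isomorphism $Aut(\Gamma_M) \cong Aut(M)$. Since $M$ is infinite and only countably many gadget vertices are added per tuple, the cardinality count gives $|\Gamma_M| = |M|$.

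The only step that requires some care is designing the gadgets so that they are simultaneously rigid (no spurious automorphisms) and pairwise non-isomorphic (so different symbols and positions cannot be confused); this is the main place one could slip up, but the construction is standard and a concrete implementation with suitably chosen path-lengths works. Everything else is routine verification.
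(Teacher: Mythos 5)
Your proposal is correct: the paper offers no proof at all (it explicitly labels this Fact as folklore), and your arguments---the strictly increasing orbit $x < \sigma(x) < \sigma^2(x) < \cdots$ for (1), and the Frucht/Hedrl\'in--Pultr style rigid-gadget encoding for (2)---are exactly the standard arguments the folklore designation points to. The only implicit hypothesis in your part (2) is that the language of $M$ has cardinality at most $|M|$ (otherwise the required pairwise non-isomorphic gadget species outrun the cardinality budget), which is harmless both in general (replace the relations of $M$ by the $Aut(M)$-orbit relations on the finite powers of $M$, of which there are at most $|M|$, without changing the automorphism group) and in the paper's application, where the Fact is only invoked for linear orders in a finite language.
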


	\begin{theorem6} Let $\lambda \geq \aleph_0$, then:
	 $$Aut(\mathbf{K}^{\lambda}_{\mathrm{gf}}) \neq Aut(\mathbf{K}^{\lambda}_{\mathrm{lf}}).$$
\end{theorem6}

	\begin{proof} By Lemma \ref{element_finite_order_in_aut} and Fact \ref{auto_order}.
\end{proof}

	We devote the rest of the section to the proof of Proposition~\ref{lemma_for_countable} and Lemma~\ref{reduction_lemma}.

	\begin{notation}\label{notation_fixpoint} Given a group $G$ and $H \leq G$ we let:
	$$Aut_H(G) = \{ \pi \in Aut(G) : \pi \restriction H = id_H \}.$$
\end{notation}
	
	\begin{fact}\label{fact_for_conj_auto} If $G \in \mathbf{K}^{\lambda}_{\mathrm{lf}}$ and $|G/Cent(G)| < \lambda$, then there is $H \leq G$ such that $|H| < \lambda$ and $G = \langle H \cup Cent(G) \rangle_G$.
\end{fact}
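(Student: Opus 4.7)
The plan is to produce $H$ as the subgroup generated by a transversal for $Cent(G)$ in $G$, and then use local finiteness to bound $|H|$. Concretely, set $\mu = |G/Cent(G)| < \lambda$ and choose representatives $\{g_i : i < \mu\}$ of the cosets of $Cent(G)$ in $G$, and put $H := \langle g_i : i < \mu \rangle_G$.

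The inclusion $G = \langle H \cup Cent(G)\rangle_G$ is immediate: every $g \in G$ lies in some coset $g_i Cent(G)$, so $g = g_i z$ with $g_i \in H$ and $z \in Cent(G)$. In fact, since $Cent(G)$ commutes with $H$, the product set $H \cdot Cent(G)$ is already a subgroup, and hence equals $\langle H \cup Cent(G) \rangle_G$.

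It remains to check $|H| < \lambda$. Since $G$ is locally finite, so is the subgroup $H$, and $H$ is generated by $\mu$ elements; writing $H$ as the union of $\langle F \rangle_G$ over finite $F \subseteq \{g_i : i < \mu\}$, each $\langle F \rangle_G$ is finite and there are at most $\mu + \aleph_0$ such finite subsets, so
\[
|H| \leq (\mu + \aleph_0) \cdot \aleph_0 = \mu + \aleph_0.
\]
If $\lambda > \aleph_0$, then both $\mu < \lambda$ and $\aleph_0 < \lambda$, giving $|H| < \lambda$. If $\lambda = \aleph_0$, then $\mu < \aleph_0$ is finite, hence $H$ is finitely generated and locally finite, thus finite, so again $|H| < \lambda$.

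There is no real obstacle here; the only point worth flagging is the use of local finiteness to convert the bound on the number of generators ($\mu$) into a bound on $|H|$, which is what makes the argument work in both the countable and uncountable cases uniformly.
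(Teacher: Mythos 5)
Your proof is correct, and since the paper states this as a \emph{Fact} without giving any proof, your argument simply supplies the routine verification the authors left implicit: take coset representatives of $Cent(G)$, let $H$ be the subgroup they generate, and bound $|H|$ by $\mu + \aleph_0$. Your closing remark is also on target --- local finiteness is genuinely needed only when $\lambda = \aleph_0$ (to get that the finitely generated $H$ is finite), since for $\lambda > \aleph_0$ the bound $|H| \leq \mu + \aleph_0 < \lambda$ holds for any group generated by $\mu$ elements.
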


	\begin{lemma}\label{lemma_for_conj_auto} If (A) then (B), where:
	\begin{enumerate}[(A)]
	\item \begin{enumerate}[(a)]
	\item $G \in \mathbf{K}^{\lambda}_{\mathrm{lf}}$;
	\item $H \leq G$ and $|H| < \lambda$;
	\item $G^* = Cent(G)$ and $G = \langle H \cup G^* \rangle_G$;
	\item $G^* = \bigoplus_{p \in P} G_{p}$ and $H_{p} = H \cap G_{p}$;
	\end{enumerate}
	\item \begin{enumerate}[(a)]
	\item if $\pi \in Aut_H(G)$, then $\pi(p) := \pi \restriction G_{p} \in Aut_{H_p}(G_{p})$;
	\item\label{mapping} the mapping $\pi \mapsto (\pi(p) : p \in P)$ from $Aut_H(G)$ into $\prod_{p \in P} Aut_{H_{p}}(G_{p})$ is an embedding;
	\item the embedding in (b) is onto.
	\end{enumerate}
	\end{enumerate}
\end{lemma}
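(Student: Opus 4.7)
The plan is to verify the three clauses (a), (b), (c) of (B) separately, exploiting that $G' = Cent(G)$ is simultaneously a characteristic subgroup of $G$ and central in $G$.

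For (a), I would note that since $G$ is locally finite, $G'$ is torsion abelian, and by Fact~\ref{p_part} each $G_p$ is the (characteristic) subgroup of $p$-power-order elements of $G'$. Combined with $G' = Cent(G)$ being characteristic in $G$, this forces any $\pi \in Aut(G)$ to preserve each $G_p$, so $\pi \restriction G_p \in Aut(G_p)$; the assumption $\pi \restriction H = id_H$ immediately gives $\pi \restriction H_p = id_{H_p}$.

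For (b), the assignment $\pi \mapsto (\pi \restriction G_p)_p$ is visibly a group homomorphism. For injectivity, if $\pi \restriction G_p = id$ for every $p$, then $\pi \restriction G' = id$; coupled with $\pi \restriction H = id$ and the hypothesis $G = \langle H \cup G' \rangle$, this forces $\pi = id_G$.

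The bulk of the work is (c). Given $(\pi_p)_{p \in P}$, I would first assemble $\pi' \in Aut(G')$ by $\pi' \restriction G_p := \pi_p$. A first key point is that $\pi'$ fixes $H \cap G'$ pointwise, which reduces to the identity $H \cap G' = \bigoplus_p H_p$: for any torsion element $h \in H \cap G'$, the Chinese Remainder Theorem writes each $p$-primary component of $h$ as an integer multiple of $h$ itself, so it lies in $\langle h \rangle \subseteq H$, hence in $H \cap G_p = H_p$. Next, since $G'$ is central (hence normal) and $G = \langle H \cup G' \rangle$, one has $G = H \cdot G'$, and I would define $\pi(hz) := h\,\pi'(z)$ for $h \in H$ and $z \in G'$. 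The main obstacle is well-definedness: if $h_1 z_1 = h_2 z_2$ then $h_2^{-1} h_1 = z_2 z_1^{-1} \in H \cap G'$, so by the previous step $\pi'(z_2 z_1^{-1}) = z_2 z_1^{-1}$, which rearranges (using that $G'$ is abelian) to $h_1 \pi'(z_1) = h_2 \pi'(z_2)$. Centrality of $G'$ then makes $\pi$ a homomorphism, bijectivity comes from the explicit inverse $hz \mapsto h\,\pi'^{-1}(z)$, and the requirements $\pi \restriction H = id_H$ and $\pi \restriction G_p = \pi_p$ are built into the definition.
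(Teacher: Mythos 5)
Your proof is correct and takes essentially the same route as the paper's: the whole weight lies in clause (B)(c), which both you and the paper handle by defining the preimage automorphism explicitly on $G = H \cdot G'$ and checking well-definedness via the fact that any discrepancy between two representations lies in $H \cap G'$ and splits into components belonging to the $H_p$'s (this is exactly the paper's step stating $y'_{p_\ell} \in y_{p_\ell} H_{p_\ell}$). If anything, your write-up is slightly more complete: the Chinese Remainder Theorem argument for $H \cap G' = \bigoplus_{p} H_p$, and the explicit homomorphism and bijectivity checks via centrality of $G'$ and the inverse $hz \mapsto h\,\pi'^{-1}(z)$, are all points the paper asserts or leaves implicit.
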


	\begin{proof} The non-trivial part is item (B)(c). To this extent, let $\pi_p \in Aut_{H_p}(G_p)$, for $p \in P$. It suffices to find $\pi \in Aut_H(G)$ such that $\pi(p) = \pi_p$, for every $p \in P$. We define $\pi$ as follows. For $p_1 < \cdots < p_n \in P$ an initial segment of $P$ with the induced order, $y_{p_\ell} \in G_{p_\ell}$ and $y \in H$ we let:
	$$\pi(y y_{p_1} \cdots y_{p_{n}}) = y \pi_{p_1}(y_{p_1}) \cdots \pi_{p_n}(y_{p_{n}}).$$
Now, by (A), every $g \in G$ has at least one representation of the form $g = y y_{p_1} \cdots y_{p_{n}}$, and so, for every $g \in G$, $\pi(g)$ has at least one definition. We are then left to show that the choice of representation $g = y y_{p_1} \cdots y_{p_{n}}$ does not matter. To this extent, let $g \in G$ and suppose that:
$$y y_{p_1} \cdots y_{p_{m}} = g = y' y'_{p_1} \cdots y'_{p_{k}},$$
By adding occurrences of $e$ in the representations we can assume without loss of generality that:
$$y y_{p_1} \cdots y_{p_{n}} = g = y' y'_{p_1} \cdots y'_{p_{n}}.$$
Notice now that for $1 \leq \ell \leq n$ we have:
\begin{enumerate}[(a)]
	\item $y'_{p_{\ell}} \in y_{p_{\ell}} H_{p_{\ell}}$, say $y'_{p_{\ell}} = z_{p_\ell}y_{p_{\ell}}$ with $z_{p_\ell} \in H_{p_{\ell}}$;
	\item $y' y'_{p_1} \cdots y'_{p_{n}} = y' (z_{p_1}y_{p_{1}}) \cdots (z_{p_n}y_{p_{n}}) = (y'(z_{p_1} \cdots z_{p_n})) y_{p_1} \cdots y_{p_{n}}$;
	\item $y y_{p_1} \cdots y_{p_{n}} = (y'(z_{p_1} \cdots z_{p_n})) y_{p_1} \cdots y_{p_{n}}$;
	\item $y = y'(z_{p_1} \cdots z_{p_n})$.
	\end{enumerate}
Hence, we have:	
\[ \begin{array}{rcl}
\pi(y y_{p_1} \cdots y_{p_{n}}) & = & y \pi_{p_1}(y_{p_1}) \cdots \pi_{p_n}(y_{p_{n}}) \\
					  			& = & (y'(z_{p_1} \cdots z_{p_n})) \pi_{p_1}(y_{p_1}), ..., \pi_{p_n}(y_{p_{n}}) \\
					  			& = & y' z_{p_1}\pi_{p_1}(y_{p_1}) \cdots z_{p_n}\pi_{p_n}(y_{p_{n}}) \\
					  			& = & y' \pi_{p_1}(z_{p_1}y_{p_1}) \cdots \pi_{p_n}(z_{p_n}y_{p_{n}}) \\
					  			& = & y' \pi_{p_1}(y'_{p_1}) \cdots \pi_{p_n}(y'_{p_{n}}) \\
					  			& = & \pi(y' y'_{p_1} \cdots y'_{p_{n}}). 
\end{array}	\]
\end{proof}

	\begin{proposition8} If $G \in \mathbf{K}^{\aleph_0}_{\mathrm{lf}}$, then $Aut(G)$ has a locally finite infinite subgroup.
\end{proposition8}

\begin{proof} Let $G \in \mathbf{K}^{\aleph_0}_{\mathrm{lf}}$. If $G/Cent(G)$ is infinite, then we are done, since we can embed $G/Cent(G)$ into $Aut(G)$. So suppose that $G/Cent(G)$ is finite and let $G^*$, $G_p$ and $H_p$ be as in Lemma \ref{lemma_for_conj_auto}. If for some $p \in P$ we have that $G_p$ is infinite use Lemma~\ref{lemma_for_conj_auto} and Fact \ref{J_fact}, unless $G_p$ is bounded, in which case use Lemma~\ref{lemma_for_conj_auto} and Fact \ref{bounded_char}.
If for every $p \in P$ we have that $G_p$ is finite, then the set:
$$P^* = \{ p \in P : G_p \neq H_p \text{ and } [G_p : H_p] > 2 \}$$
is infinite, and so for every $p \in P^*$ we have that $Aut_{H_p}(G_p)$ is non-trivial. Hence, considering $\prod_{p \in P^*} Aut_{H_p}(G_p)$ and using Lemma~\ref{lemma_for_conj_auto} we are done.
\end{proof}

	\begin{lemma9} To prove Conjecture~\ref{conjecture_on_auto_intro} it suffices to prove Conjecture~\ref{conjecture_simplified}, where:
\end{lemma9}

	\begin{conjecture10} If $\lambda > \aleph_0$, $G \in \mathbf{K}^{\lambda}_{\mathrm{lf}}$ is an abelian $p$-group, $H \leq G$ and $|H| < \lambda$, then $Aut_H(G)$ has a locally finite subgroup of power~$\lambda$.
\end{conjecture10}

	\begin{proof}[Proof of Lemma~\ref{reduction_lemma}] By Fact \ref{fact_for_conj_auto} and Lemma \ref{lemma_for_conj_auto}.
\end{proof}

\end{document}